\documentclass{amsart}

\newtheorem{theorem}[equation]{Theorem}
\newtheorem{lemma}[equation]{Lemma}

\newtheorem{proposition}[equation]{Proposition}

\numberwithin{equation}{section}

\usepackage{amscd,amsmath,amssymb,verbatim}

\begin{document}

\title[Hasse invariants]{Hasse invariants and mod $p$ solutions of $A$-hypergeometric systems} 
\author{Alan Adolphson}
\address{Department of Mathematics\\
Oklahoma State University\\
Stillwater, Oklahoma 74078}
\email{adolphs@math.okstate.edu}
\author{Steven Sperber}
\address{School of Mathematics\\
University of Minnesota\\
Minneapolis, Minnesota 55455}
\email{sperber@math.umn.edu}
\date{\today}
\keywords{}
\subjclass{}
\begin{abstract}
Igusa noted that the Hasse invariant of the Legendre family of elliptic curves over a finite field of odd characteristic is a solution mod $p$ of a Gaussian hypergeometric equation.  We show that any family of exponential sums over a finite field has a Hasse invariant which is a sum of products of mod $p$ solutions of $A$-hypergeometric systems.
\end{abstract}
\maketitle

%%% Local Variables: 
%%% mode: latex
%%% TeX-master: "pintsupp"
%%% End: 

\section{Introduction}

Igusa\cite{I} noted that when $p$ is an odd prime, the Hasse invariant 
\[ H(\lambda) = (-1)^{(p-1)/2} \sum_{i=0}^{(p-1)/2} \binom{(p-1)/2}{i}^2 \lambda^i \]
of the Legendre family of elliptic curves $y^2 = x(x-1)(x-\lambda)$ over a finite field of characteristic $p$ is a mod $p$ solution of the Gaussian hypergeometric equation
\[ \lambda(1-\lambda)y'' + (1-2\lambda)y' - (1/4)y= 0. \]
In fact, $(-1)^{(p-1)/2}H(\lambda)$ is congruent mod $p$ to the truncation of the Gaussian hypergeometric series
${}_2F_1(1/2,1/2,1;\lambda)$ at $\lambda^{(p-1)/2}$.
The numerator of the zeta function of the elliptic curve has a unit root if and only if $H(\lambda)\neq 0$.  More precisely, let 
\[ Z_\lambda(T) = \frac{(1-\alpha_1(\lambda)T)(1-\alpha_2(\lambda)T)}{(1-T)(1-qT)} \]
be the zeta function of the (projectivized) elliptic curve over ${\mathbb F}_q$, $q=p^a$, when $\lambda\neq 0,1,\infty$, where $\alpha_1(\lambda)$ denotes the unit root when $\lambda$ is not supersingular.  Dwork\cite[Equation~(6.29)]{D1} gave a formula for $\alpha_1(\lambda)$ in terms of values of (analytic continuations of) $p$-adic hypergeometric functions.  The reduction mod $p$ of Dwork's formula is
\[ \alpha_1(\lambda) \equiv H(\lambda)H(\lambda^p)\cdots H(\lambda^{p^{a-1}}) \pmod{p}. \]
The purpose of this article is to give a generalization of this congruence to arbitrary families of exponential sums.  In the process we extend earlier work of Beukers\cite{B} describing mod $p$ solutions of $A$-hypergeometric systems.

Let $p$ be a prime, let ${\mathbb F}_q$ be the finite field of $q=p^a$ elements, and let
\[ f_{\lambda} = \sum_{j=1}^N \lambda_jx^{{\bf a}_j}\in{\mathbb F}_q[x_1^{\pm 1},\dots,x_n^{\pm 1}]. \]
Put $A=\{{\bf a}_1,\dots,{\bf a}_N\}\subseteq{\mathbb Z}^n$.  We make no assumptions on $A$ until Section 7.
Let $\Psi:{\mathbb F}_q\to{\mathbb Q}_p(\zeta_p)^\times$ be a nontrivial additive character and let $\omega:{\mathbb F}_q^\times\to{\mathbb Q}_p(\zeta_{q-1})^\times$ be the Teichm\"uller character.  We consider the exponential sum
\begin{equation}
S(f_\lambda,{\bf e}) = \sum_{x=(x_1,\dots,x_n)\in({\mathbb F}_q^\times)^n} \omega(x_1)^{-e_1}\cdots\omega(x_n)^{-e_n} \Psi(f_\lambda(x))\in {\mathbb Q}_p(\zeta_p,\zeta_{q-1}),
\end{equation}
where we set ${\bf e} = (e_1,\dots,e_n)\in{\mathbb Z}^n$.   

It can happen that $S(f_\lambda,{\bf e})=0$ for all $\lambda=(\lambda_1,\dots,\lambda_N)\in{\mathbb F}_q^N$ (see Eq.~(6.2) below).
If $S(f_\lambda,{\bf e})\neq 0$ for some $\lambda$, then using the method of Ax\cite{A} and Stickelberger's Theorem it is not hard to show (see Eq.~(6.3) below) that there is a nonnegative integer $C$ and a polynomial $\hat{H}\in({\mathbb Q}\cap{\mathbb Z}_p)[\Lambda_1,\dots,\Lambda_N]$ of degree $\leq q-1$ in each variable and nonzero modulo~$p$, both depending only on $A$ and ${\bf e}$, such that
\begin{equation}
S(f_\lambda,{\bf e}) \equiv \hat{H}(\omega(\lambda_1),\dots,\omega(\lambda_N))\pi^C\pmod{\pi^{C+1}}, 
\end{equation}
where $\pi$ is a certain uniformizer for the field ${\mathbb Q}_p(\zeta_p,\zeta_{q-1})$.  Let $H(\lambda_1,\dots,\lambda_N)\in{\mathbb F}_p[\lambda_1,\dots,\lambda_N]$ be the reduction mod $p$ of $\hat{H}(\omega(\lambda_1),\dots,\omega(\lambda_N))$.  We call $H(\lambda)$ the {\it Hasse invariant\/} of the family of exponential sums $S(f_\lambda,{\bf e})$.  One thus has 
\begin{equation}
{\rm ord}_p\;S(f_\lambda,{\bf e}) \geq  \frac{C}{p-1}
\end{equation}
with equality holding if and only if $H(\lambda_1,\dots,\lambda_N)\neq 0$, where ${\rm ord}_p$ denotes the $p$-adic valuation normalized by ${\rm ord}_p\;p = 1$.

The estimate (1.3) improves on our previous work (\cite[Theorem 2]{AS1}, \cite[Theorem~3.3]{AS2}), where the lower bound was expressed in terms of the Newton polyhedron of $f_\lambda$.  
In the case where all multiplicative characters are trivial and $f_\lambda$ is an ordinary polynomial, estimate (1.3) has already been observed by Moreno, Shum, Castro, and Kumar\cite[Theorems~7 and~9]{M}, who analyzed $S(f_\lambda,{\bf 0})$ using the notion of $p$-degree.  These sums over ${\mathbb A}^n$ have been studied further by R. Blache\cite{Bl1,Bl2,Bl3}.  The notion of $p$-degree also plays a key role in the proof of our results.

The main point of this article is to show that $H(\lambda)$ is a sum of mod~$p$ solutions of certain $A$-hypergeometric systems.  We first show (Sections~2 and~3) that associated to certain lattice points $\gamma\in{\mathbb Z}^n$ are polynomials $F_\gamma(\lambda)\in{\mathbb F}_p[\lambda_1,\dots,\lambda_N]$ that satisfy $A$-hypergeometric systems mod $p$.  This generalizes earlier work of Beukers\cite{B}.  We then show (Section~6) that associated to $S(f_\lambda,{\bf e})$ is a finite set $\Gamma$ of sequences $(\gamma_0,\gamma_1,\dots,\gamma_{a-1})$ of these lattice points such that
\[ H(\lambda_1,\dots,\lambda_N) = \sum_{(\gamma_0,\dots,\gamma_{a-1})\in \Gamma} F_{\gamma_0}(\lambda) F_{\gamma_1}(\lambda^p)\cdots F_{\gamma_{a-1}}(\lambda^{p^{a-1}}) \]
(see Theorem 6.6 below for the precise statement).  The proof provides an explicit calculation of $C$ and the $\gamma_i$ in terms of the set $A$ and the vector~${\bf e}$ (see Section~6).  Using the natural toric decomposition of affine space, the result extends to exponential sums on ${\mathbb A}^n$ as well (see Theorem 7.9).  Although not needed in the rest of the article, we describe in Section 4 some relations between the $F_\gamma$ and truncations of series solutions of $A$-hypergeometric systems in characteristic~$0$.  

\section{$A$-hypergeometric systems}

Let ${\mathbb N}A$ (where ${\mathbb N}=\{0,1,\dots\}$) denote the semigroup generated by $A$ and let ${\mathbb Z}A\subseteq{\mathbb Z}^n$ denote the group generated by $A$.  Let $L\subseteq{\mathbb Z}^N$ be the lattice of relations on $A$:
\[ L = \bigg\{l=(l_1,\dots,l_N)\in{\mathbb Z}^N\mid \sum_{i=1}^N l_i{\bf a}_i = {\bf 0}\bigg\}. \]
Let $\beta = (\beta_1,\dots,\beta_n)\in{\mathbb C}^n$.  The $A$-{\it hypergeometric system with parameter $\beta$\/} is the system of partial differential operators in variables $\lambda_1,\dots,\lambda_N$ consisting of the {\it box operators}
\begin{equation}
\Box_l = \prod_{l_i>0}\bigg(\frac{\partial}{\partial \lambda_i}\bigg)^{l_i} - \prod_{l_i<0}\bigg(\frac{\partial}{\partial \lambda_i}\bigg)^{-l_i} \quad \text{for $l\in L$} 
\end{equation}
and the {\it Euler\/} (or {\it homogeneity\/}) {\it operators}
\begin{equation}
Z_i = \sum_{j=1}^N a_{ji}\lambda_j\frac{\partial}{\partial \lambda_j} - \beta_i\quad\text{for $i=1,\dots,n$,} 
\end{equation}
where ${\bf a}_j=(a_{j1},\dots,a_{jn})$.  If there is a linear form $h$ on ${\mathbb R}^n$ such that $h({\bf a}_i) = 1$ for $i=1,\dots,N$, we call this system {\it nonconfluent\/}; otherwise, we call it {\it confluent}.

If the $\beta_i$ are $p$-integral rational numbers, one can consider (2.1) and (2.2) modulo~$p$ and ask for their solutions in ${\mathbb F}_p((\lambda_1,\dots,\lambda_N))$, the quotient field of the formal power series ring ${\mathbb F}_p[[\lambda_1,\dots,\lambda_N]]$.  However, this is not the relevant system for studying exponential sums: one needs the reduction mod $p$ of the $p$-adically normalized $A$-hypergeometric system.   In \cite{D}, Dwork normalized the system corresponding to the Bessel differential equation in order to describe the variation of cohomology of the family of Kloosterman sums over a finite field.  Essentially, this normalization guarantees that the $p$-adic radius of convergence of series solutions is equal to 1.  For a more recent example of this see \cite{AS}, where we extend one of Dwork's results to arbitrary families of exponential sums.  The normalization involves simply replacing each $\lambda_i$ by $\pi\lambda_i$, where $\pi^{p-1} = -p$ (note that $\pi$ is a uniformizer for the field ${\mathbb Q}_p(\zeta_p,\zeta_{q-1})$).  This change of variable has no effect on the Euler operators, however, the box operator $\Box_l$ is transformed to 
\[ \pi^{-\sum_{l_i>0} l_i}\prod_{l_i>0}\bigg(\frac{\partial}{\partial \lambda_i}\bigg)^{l_i} - \pi^{\sum_{l_i<0} l_i}\prod_{l_i<0} \bigg(\frac{\partial}{\partial \lambda_i}\bigg)^{-l_i}. \]
We then multiply by the smallest power of $\pi$ that makes both coefficients $p$-integral:
\[ \pi^{\max\{\sum_{l_i>0} l_i,-\sum_{l_i<0} l_i\}}\bigg(\pi^{-\sum_{l_i>0} l_i}\prod_{l_i>0}\bigg(\frac{\partial}{\partial \lambda_i}\bigg)^{l_i} - \pi^{\sum_{l_i<0} l_i}\prod_{l_i<0} \bigg(\frac{\partial}{\partial \lambda_i}\bigg)^{-l_i}\bigg). \]
Reducing this expression mod $\pi$ gives for $l\in L$ the operator
\begin{equation}
\overline{\Box}_l = \begin{cases} \prod_{l_i>0} (\partial/\partial \lambda_i)^{l_i} & \text{if $\sum_{i=1}^N l_i > 0$,} \\ \prod_{l_i<0}(\partial/\partial\lambda_i)^{-l_i} & \text{if $\sum_{i=1}^N l_i<0$,} \\
\prod_{l_i>0}(\partial/\partial\lambda_i)^{l_i} - \prod_{l_i<0}(\partial/\partial\lambda_i)^{-l_i} & \text{if $\sum_{i=1}^N l_i = 0$.} \end{cases}
\end{equation}

When $\beta$ is an $n$-tuple of $p$-integral rational numbers, one can thus consider two $A$-hyperge\-o\-met\-ric systems modulo $p$: the first consisting of (2.1) and (2.2) and the second consisting of (2.2) and (2.3).  It is the second system whose mod $p$ solutions are related to the exponential sums~(1.1).  However, note that when the system is nonconfluent it is the third possibility in (2.3) that always holds and the two systems mod $p$ are identical.   

Let $\beta\in{\mathbb Z}^n$ and set 
\begin{align*}
 U(\beta) &= \bigg\{ u=(u_1,\dots,u_N)\in{\mathbb Z}^N\mid \sum_{i=1}^N u_i{\bf a}_i =  \beta \bigg\}, \\
U^+(\beta) &= \bigg\{ u=(u_1,\dots,u_N)\in{\mathbb N}^N\mid \sum_{i=1}^N u_i{\bf a}_i =  \beta \bigg\}.
\end{align*}
It is clear that $U(\beta)\neq\emptyset$ (resp.: $U^+(\beta)\neq\emptyset$) if and only if $\beta\in{\mathbb Z}A$ (resp.: $\beta\in{\mathbb N}A$).  For any nonnegative integer $k$, put
\[ U^+_k(\beta) = \{u\in U^+(\beta)\mid u_i\leq k\text{ for $i=1,\dots,N$}\}. \]
We define the {\it weight\/} of $\beta$, $w(\beta)$, to be
\[ w(\beta) = \min\bigg\{\sum_{i=1}^N u_i\mid u\in U^+(\beta)\bigg\}, \]
and we say that $u\in U^+(\beta)$ is {\it minimal\/} if $\sum_{i=1}^N u_i = w(\beta)$.  Let $U^+_{\rm min}(\beta)$ be the subset of minimal elements of $U^+(\beta)$.
We say that $\beta$ is {\it good\/} if $U^+_{\rm min}(\beta)\subseteq U^+_{p-1}(\beta)$ and we say that $\beta$ is {\it very good\/} if $U^+(\beta) = U^+_{p-1}(\beta)$.  When $\beta$ is good (resp.: very good), we can define
\begin{equation}
F_\beta(\lambda_1,\dots,\lambda_N) = \sum_{u\in U^+_{\rm min}(\beta)} \frac{\lambda_1^{u_1}\cdots\lambda_N^{u_N}} {u_1!\cdots u_N!}\in{\mathbb F}_p[\lambda_1,\dots,\lambda_N] \end{equation}
(resp.:
\begin{equation}
G_\beta(\lambda_1,\dots,\lambda_N) = \sum_{u\in U^+(\beta)} \frac{\lambda_1^{u_1}\cdots\lambda_N^{u_N}} {u_1!\cdots u_N!}\in{\mathbb F}_p[\lambda_1,\dots,\lambda_N] ).
\end{equation}
For explicit examples of such polynomials, we refer to Example 1 in Section 3 and Example 2 in Section 5.

Let
\begin{align*}
\sigma_A(\beta) &= \{\gamma\in (\beta + p{\mathbb Z}^n)\cap {\mathbb N}A\mid \text{$\gamma$ is good}\}, \\
\tau_A(\beta) &= \{\gamma\in (\beta + p{\mathbb Z}^n)\cap {\mathbb N}A\mid \text{$\gamma$ is very good}\}. 
\end{align*}
Both $\sigma_A(\beta)$ and $\tau_A(\beta)$ are finite sets as both are contained in the finite set 
\[ \bigg\{ \sum_{i=1}^N c_i{\bf a}_i\mid c_i\in\{0,1,\dots,p-1\}\text{ for $i=1,\dots,N$}\bigg\}. \]
Even if $(\beta + p{\mathbb Z}^n)\cap{\mathbb N}A$ is nonempty, the set $\tau_A(\beta)$ may be empty; however, 
\[ (\beta+p{\mathbb Z}^n)\cap{\mathbb N}A\neq\emptyset\quad \text{implies}\quad \sigma_A(\beta)\neq\emptyset. \]
More precisely, we have the following result.

\begin{lemma}
Let $\gamma\in(\beta + p{\mathbb Z}^n)\cap{\mathbb N}A$ be such that $w(\gamma)\leq w(\gamma')$ for all $\gamma'\in(\beta + p{\mathbb Z}^n)\cap{\mathbb N}A$.  Then $\gamma$ is good.
\end{lemma}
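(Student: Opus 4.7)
The plan is to argue by contradiction, using the fact that the badness hypothesis on $\gamma$ immediately furnishes a candidate $\gamma'$ of strictly smaller weight in the same residue class mod $p$. Suppose $\gamma$ is not good; then there exists some $u = (u_1,\dots,u_N) \in U^+_{\min}(\gamma)$ with $u_i \geq p$ for some index $i$.

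The construction is to define $u' = u - p\mathbf{e}_i \in {\mathbb N}^N$ (where $\mathbf{e}_i$ is the $i$-th standard basis vector in ${\mathbb Z}^N$) and set $\gamma' = \sum_{j=1}^N u'_j \mathbf{a}_j = \gamma - p\mathbf{a}_i$. Then three things need to be checked. First, $u' \in {\mathbb N}^N$ since the hypothesis $u_i \geq p$ ensures nonnegativity, and by construction $u' \in U^+(\gamma')$, so $\gamma' \in {\mathbb N}A$. Second, $\gamma' = \gamma - p\mathbf{a}_i \equiv \gamma \equiv \beta \pmod p$, hence $\gamma' \in (\beta + p{\mathbb Z}^n) \cap {\mathbb N}A$. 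Third,
\[
w(\gamma') \leq \sum_{j=1}^N u'_j = \sum_{j=1}^N u_j - p = w(\gamma) - p < w(\gamma),
\]
contradicting the minimality hypothesis on $w(\gamma)$.

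There is no real obstacle here: the statement is essentially a reformulation of the observation that reducing any coordinate by $p$ moves $\gamma$ inside the same coset $\beta + p{\mathbb Z}^n$ while decreasing total weight by exactly $p$. The only thing to be careful about is making sure the resulting $u'$ still lies in ${\mathbb N}^N$, which is exactly what the assumption $u_i \geq p$ guarantees, and that the new element lies in ${\mathbb N}A$, which is automatic because $u'$ itself certifies it.
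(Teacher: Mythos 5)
Your proof is correct and follows essentially the same contradiction argument as the paper; the only cosmetic difference is that you subtract $p$ from a single large coordinate, whereas the paper subtracts $p$ from every coordinate that is $\geq p$, and either choice yields the required strict drop in weight within the coset $\beta + p{\mathbb Z}^n$.
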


\begin{proof}
Let $\gamma\in(\beta+p{\mathbb Z}^n)\cap{\mathbb N}A$ satisfy the hypothesis of the lemma.  If $u\in U^+_{\rm min}(\gamma)$ had $u_i\geq p$ for some $i$, define $u' = (u_1',\dots,u_N')$ by
\[ u'_i = \begin{cases} u_i & \text{if $u_i\leq p-1$,} \\ u_i-p & \text{if $u_i\geq p$,} \end{cases} \]
and set $\gamma' = \sum_{i=1}^N u'_i{\bf a}_i$.  Then $\gamma'\in(\beta + p{\mathbb Z}^n)\cap{\mathbb N}A$ but
\[ w(\gamma') \leq \sum_{i=1}^N u'_i<\sum_{i=1}^N u_i = w(\gamma), \]
contradicting the choice of $\gamma$.
\end{proof}

Let $S_A(\beta)\subseteq {\mathbb F}_p((\lambda_1,\dots,\lambda_N))$ (resp.: $T_A(\beta)$) be the solution space of the mod $p$ $A$-hypergeometric system (2.2), (2.3) (resp.: (2.1), (2.2)).  Since differential operators in characteristic $p$ annihilate all $p$-th powers, we consider $S_A(\beta)$ and $T_A(\beta)$ as vector spaces over the field ${\mathbb F}_p((\lambda_1^p,\dots,\lambda_N^p))$.

The following result is a slight generalization of Beukers\cite[Proposition 4.1]{B}.  Although it is not needed for this article, we include it to provide some context for the results of the next section.

\begin{theorem}
The set of polynomials $\{G_\gamma\mid \gamma\in\tau_A(\beta)\}$ is a basis for $T_A(\beta)$.
\end{theorem}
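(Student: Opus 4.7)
The plan is to study $T_A(\beta)$ via the canonical Frobenius-type expansion
$$\phi = \sum_{u \in \{0,\dots,p-1\}^N} \frac{\lambda^u}{u!}\,\psi_u, \qquad \psi_u \in {\mathbb F}_p((\lambda_1^p,\dots,\lambda_N^p)),$$
valid for any $\phi\in{\mathbb F}_p((\lambda_1,\dots,\lambda_N))$; each $\psi_u$ is annihilated by every $\partial/\partial\lambda_i$. Write $l=l^+-l^-$ with $l^\pm\in{\mathbb N}^N$ disjointly supported, and abbreviate $\partial^{l^+}:=\prod_i(\partial/\partial\lambda_i)^{l^+_i}$, so $\Box_l=\partial^{l^+}-\partial^{l^-}$. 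First verify that each $G_\gamma$ for $\gamma\in\tau_A(\beta)$ lies in $T_A(\beta)$: the Euler operator $Z_i$ multiplies $\lambda^u/u!$ by $\sum_j a_{ji}u_j-\beta_i=\gamma_i-\beta_i\equiv 0\pmod p$, while the identity $\partial^{l^+}(\lambda^u/u!)=\lambda^{u-l^+}/(u-l^+)!$ (for $u\ge l^+$; zero otherwise) and the substitution $v=u-l^+$ give
$$\partial^{l^+}G_\gamma \;=\; \sum_{v\in U^+(\gamma')}\frac{\lambda^v}{v!} \;=\; G_{\gamma'}, \qquad \gamma':=\gamma-\sum_i l^+_i{\bf a}_i.$$
Very-goodness of $\gamma$ is used essentially twice: it forces the range bound $v+l^+\le p-1$ to hold automatically for every $v\in U^+(\gamma')$, and it implies $\gamma'$ is very good as well. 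Because $l\in L$ gives $\gamma-\sum_i l^-_i{\bf a}_i=\gamma'$, the analogous computation for $\partial^{l^-}$ also yields $G_{\gamma'}$, so $\Box_l G_\gamma=0$.

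The heart of the argument is to classify the coefficients $\psi_u$ of an arbitrary solution $\phi\in T_A(\beta)$. The Euler equations force $\psi_u=0$ unless $\gamma(u):=\sum_j u_j{\bf a}_j$ lies in $(\beta+p{\mathbb Z}^n)\cap{\mathbb N}A$. Extending $\psi_u:=0$ for $u\notin\{0,\dots,p-1\}^N$, a term-by-term expansion shows that $\Box_l\phi=0$ is equivalent to $\psi_{v+l^+}=\psi_{v+l^-}$ for every $v\ge 0$; re-indexing by $u=v+l^+$ this reads, for each $u\in\{0,\dots,p-1\}^N$ with $u\ge l^+$,
$$\psi_u \;=\; \begin{cases}\psi_{u-l}&\text{if }u-l\in\{0,\dots,p-1\}^N,\\ 0&\text{otherwise.}\end{cases}$$
Now fix $\gamma\in(\beta+p{\mathbb Z}^n)\cap{\mathbb N}A$. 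If $\gamma$ is very good, then for any $u,u'\in U^+(\gamma)$ the vector $l:=u-u'\in L$ satisfies $u\ge l^+$ automatically (as $l^+_i\le u_i$) and $u-l=u'\in\{0,\dots,p-1\}^N$, so all $\psi_u$ with $u\in U^+(\gamma)$ share a single value $c_\gamma\in{\mathbb F}_p((\lambda_1^p,\dots,\lambda_N^p))$, and the $\gamma$-contribution to $\phi$ equals $c_\gamma G_\gamma$. If $\gamma$ is not very good, fix any $u''\in U^+(\gamma)\setminus U^+_{p-1}(\gamma)$; then for every $u\in U^+_{p-1}(\gamma)$ the choice $l:=u-u''\in L$ still satisfies $u\ge l^+$, while $u-l=u''\notin\{0,\dots,p-1\}^N$, forcing $\psi_u=0$.

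Assembling these gives $\phi=\sum_{\gamma\in\tau_A(\beta)}c_\gamma G_\gamma$, proving that the $G_\gamma$ span $T_A(\beta)$. Linear independence over ${\mathbb F}_p((\lambda_1^p,\dots,\lambda_N^p))$ is immediate: the monomial support of each $G_\gamma$ lies in $U^+(\gamma)\subseteq\{0,\dots,p-1\}^N$, the slices $U^+(\gamma)$ are disjoint, and multiplying by elements of ${\mathbb F}_p((\lambda_1^p,\dots,\lambda_N^p))$ only shifts exponents by multiples of $p$, so no cancellation between distinct $\gamma$'s is possible. I expect the main obstacle to be the middle paragraph---extracting the ``either-equal-or-zero'' dichotomy from $\Box_l\phi=0$ and consistently tracking the constraint $u_i\le p-1$ under the shifts by $l^\pm$; the verification and independence steps are essentially mechanical by comparison.
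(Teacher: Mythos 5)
Your proposal is correct, and it is essentially the argument the paper has in mind: the paper only sketches this proof by deferring to Beukers \cite[Proposition 4.1]{B}, whose method (expand a solution over the subfield ${\mathbb F}_p((\lambda_1^p,\dots,\lambda_N^p))$ with monomial coefficients, use the Euler operators to confine the support to the slices $U^+_{p-1}(\gamma)$ with $\gamma\equiv\beta\pmod{p}$, then use the box operators to force the coefficients within a slice to be equal or to vanish) is exactly what you carry out, and it parallels the paper's own detailed proof of Theorem 3.1 for $S_A(\beta)$. Your handling of the equal-or-zero dichotomy via the convention $\psi_w=0$ outside $\{0,\dots,p-1\}^N$ is sound, since $u\ge l^+$ guarantees $u-l\ge 0$, so the only way $u-l$ can leave the box is through a coordinate $\ge p$, which is precisely what very-goodness rules out.
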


\begin{proof}[Sketch of proof]
One can repeat the argument of \cite{B}.  Although Beukers makes an overriding assumption that his system is nonconfluent, that hypothesis is not used in the part of his paper dealing with mod $p$ solutions.  He also assumes that ${\mathbb N}A$ equals the set of all lattice points in the real cone generated by $A$, a condition that is needed in the proof of \cite[Proposition 4.1]{B}.  Using our set of solutions $\{G_\gamma\}_{\gamma\in\tau_A(\beta)}$ in place of his, the proof of \cite[Proposition 4.1]{B} remains valid in the unsaturated case.  (In the saturated case, our set of solutions is identical to the one constructed by Beukers.)
\end{proof}

\section{Mod $p$ solutions}

The main purpose of this section is to prove the following result.

\begin{theorem}
The set of polynomials $\{F_\gamma\mid\gamma\in\sigma_A(\beta)\}$ is a basis for $S_A(\beta)$.  
\end{theorem}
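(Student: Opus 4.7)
The plan is to follow Beukers' strategy from the proof of Theorem~2.6, modified to accommodate the strictly stronger box operators $\overline{\Box}_l$ of (2.3).  The main obstacle will arise in the spanning step: I must show that $\gamma\in(\beta+p{\mathbb Z}^n)\cap{\mathbb N}A$ which fail to be good contribute nothing to $S_A(\beta)$, which will require using the $\sum l_i=0$ box operators to propagate the vanishing forced by the presence of minimal exponents outside the truncation range $\{0,\dots,p-1\}^N$.

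First I would check that $F_\gamma\in S_A(\beta)$ for every $\gamma\in\sigma_A(\beta)$.  The Euler operators (2.2) act trivially because each monomial of $F_\gamma$ has degree vector $\gamma\equiv\beta\pmod{p}$.  For $\overline{\Box}_l$ with $\sum l_i>0$: minimality prevents $u\geq l^+$ for any $u\in U^+_{\rm min}(\gamma)$ (else $u-l\in U^+(\gamma)$ would have strictly smaller coordinate sum), so $\prod_{l_i>0}(\partial/\partial\lambda_i)^{l_i}$ annihilates each monomial of $F_\gamma$; the case $\sum l_i<0$ follows by replacing $l$ with $-l$.  For $\sum l_i=0$, the translation $u\mapsto u-l$ is a weight-preserving bijection between $\{u\in U^+_{\rm min}(\gamma):u\geq l^+\}$ and $\{u'\in U^+_{\rm min}(\gamma):u'\geq l^-\}$, and a direct computation, legitimate since goodness of $\gamma$ keeps every $u_i\leq p-1$ so no relevant factorial vanishes mod $p$, shows the coefficients of $\lambda^{u-l^+}$ in the two halves of $\overline{\Box}_l F_\gamma$ match.

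Linear independence over ${\mathbb F}_p((\lambda_1^p,\dots,\lambda_N^p))$ is then formal: by goodness each $F_\gamma$ is supported on exponents in $\{0,\dots,p-1\}^N$; a monomial $\lambda^u$ determines $\gamma=\sum u_i{\bf a}_i$, so distinct $F_\gamma$ have disjoint supports; and $\{\lambda^u:u\in\{0,\dots,p-1\}^N\}$ is a basis of ${\mathbb F}_p((\lambda_1,\dots,\lambda_N))$ over ${\mathbb F}_p((\lambda_1^p,\dots,\lambda_N^p))$.

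For spanning, I would write any $\phi\in S_A(\beta)$ in the basis above as $\phi=\sum_u\phi_u\lambda^u$ with $\phi_u\in{\mathbb F}_p((\lambda_1^p,\dots,\lambda_N^p))$.  The Euler operators force $\phi_u=0$ unless $\sum u_j{\bf a}_j\in\beta+p{\mathbb Z}^n$, yielding $\phi=\sum_\gamma\phi_\gamma$ indexed by $\gamma\in(\beta+p{\mathbb Z}^n)\cap{\mathbb N}A$; since each $\overline{\Box}_l$ carries the $\gamma$-graded piece into the $(\gamma-\delta)$-graded piece for the single vector $\delta=\sum_{l_i>0}l_i{\bf a}_i$ (using $l\in L$ so both halves in the $\sum l_i=0$ case shift by the same $\delta$), each $\phi_\gamma$ is separately a solution.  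Applying $\overline{\Box}_l$ with $\sum l_i>0$ to $\phi_\gamma$ and using that $u\mapsto u-l^+$ is injective forces $\phi_u=0$ whenever $u\geq l^+$, so the support of $\phi_\gamma$ lies in $U^+_{\rm min}(\gamma)\cap U^+_{p-1}(\gamma)$.  To dispatch the main obstacle, suppose $\gamma$ is not good; pick $u^*\in U^+_{\rm min}(\gamma)\setminus U^+_{p-1}(\gamma)$ and any $u\in U^+_{\rm min}(\gamma)\cap U^+_{p-1}(\gamma)$, and apply $\overline{\Box}_l$ with $l=u-u^*\in L$ and $\sum l_i=0$: comparing the coefficients of $\lambda^{u-l^+}$ on the two sides yields $\phi_u\cdot\prod_{u_i>u^*_i}(u_i!/u^*_i!)=\phi_{u^*}\cdot(\text{analogous factor})$, whose right side is zero since $\phi_{u^*}=0$, while the left-hand factor is a unit mod $p$ (each $u_i\leq p-1$); hence $\phi_u=0$ and $\phi_\gamma=0$.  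Finally, for good $\gamma$, the same $\sum l_i=0$ relation with $l=u-u'$ and $u,u'\in U^+_{\rm min}(\gamma)$ simplifies (all factorial ratios being units mod $p$) to the transport identity $\phi_u\cdot u!=\phi_{u'}\cdot(u')!$, showing $\phi_\gamma$ is an ${\mathbb F}_p((\lambda_1^p,\dots,\lambda_N^p))$-multiple of $F_\gamma$.  Summing over $\gamma\in\sigma_A(\beta)$ gives $\phi$ as an ${\mathbb F}_p((\lambda_1^p,\dots,\lambda_N^p))$-linear combination of $\{F_\gamma\}_{\gamma\in\sigma_A(\beta)}$, completing the proof.
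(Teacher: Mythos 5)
Your proposal is correct and follows essentially the same route as the paper's proof: the same minimality argument for the box operators with $\sum_i l_i\neq 0$, the same bijection $u\mapsto u-l$ for the $\sum_i l_i=0$ case, the same disjoint-support linear independence, and the same spanning argument via the graded decomposition, the operators $\overline{\Box}_{u-v}$ to kill non-minimal exponents, and the coefficient identity (the paper's Eq.\ (3.7), your transport identity $\phi_u\,u!=\phi_{u'}\,u'!$) to handle the non-good and good cases. No substantive differences.
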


\begin{proof}
We first show that each $F_\gamma$ is a solution of the system (2.2), (2.3).
The definition of $\sigma_A(\beta)$ implies that $F_\gamma$ satisfies Equations~(2.2) modulo $p$, so it remains to show that $F_\gamma$ is a mod $p$ solution of the box operators (2.3).  Fix $l=(l_1,\dots,l_N)$ satisfying $\sum_{i=1}^N l_i{\bf a}_i = {\bf 0}$.  First suppose that $\sum_{i=1}^N l_i>0$.  We must show that
\begin{equation}
\prod_{l_i>0} \bigg(\frac{\partial}{\partial \lambda_i}\bigg)^{l_i}(\lambda_1^{u_1}\cdots\lambda_N^{u_N}) = 0
\quad\text{ for all $u\in U^+_{\rm min}(\gamma)$.}
\end{equation}
Fix $u\in U^+_{\rm min}(\gamma)$ and consider $v\in{\mathbb Z}^N$ defined by $v_i= u_i- l_i$ for $i=1,\dots,N$.  Then we have
\[ \sum_{i=1}^N v_i{\bf a}_i = \sum_{i=1}^N u_i{\bf a}_i - \sum_{i=1}^N l_i{\bf a}_i = \gamma \]
and 
\[ \sum_{i=1}^N v_i = \sum_{i=1}^N u_i - \sum_{i=1}^N l_i < \sum_{i=1}^N u_i = w(\gamma). \]
If $v_i\geq 0$ for all $i$, we would have $v\in U^+(\gamma)$ with $\sum_{i=1}^N v_i<\sum_{i=1}^N u_i$, contradicting $u\in U^+_{\min}(\gamma)$.  It follows that $v_i<0$ for some $i$.  But $u_i\geq 0$ for all $i$ implies $u_i- l_i\geq 0$ if $l_i\leq 0$, so we must have $u_i-l_i<0$ for some $l_i>0$.  This immediately implies~(3.2).

The proof is similar if $\sum_{i=1}^N l_i<0$ so suppose that $\sum_{i=1}^N l_i = 0$.  We must show that
\begin{equation}
\bigg(\prod_{l_i>0} \bigg(\frac{\partial}{\partial \lambda_i}\bigg)^{l_i} - \prod_{l_i<0} \bigg(\frac{\partial}{\partial \lambda_i}\bigg)^{-l_i}\bigg)\bigg(\sum_{u\in U^+_{\rm min}(\gamma)} \frac{\lambda_1^{u_1}\cdots\lambda_N^{u_N}}{u_1!\cdots u_N!}\bigg) = 0.
\end{equation} 
Write $l = l_+-l_-$, where $l_+ = (l^+_1,\dots,l^+_N)$ and $l_- = (l^-_1,\dots,l^-_N)$ are defined by
\[ l^+_i = \max\{l_i,0\}, \quad l^-_i = \max\{-l_i,0\}. \]
Let 
\begin{align*} 
U_1(\gamma) &= \{u\in U^+_{\rm min}(\gamma)\mid u_i-l_i\geq 0\text{ for $i=1,\dots,N$}\}, \\
U_2(\gamma) &= \{v\in U^+_{\rm min}(\gamma)\mid v_i + l_i\geq 0\text{ for $i=1,\dots,N$}\}.
\end{align*}
Then we have
\begin{align}
\prod_{l_i>0}\bigg(\frac{\partial}{\partial \lambda_i}\bigg)^{l_i}(F_\gamma) &= \sum_{u\in U_1(\gamma)} \frac{\lambda_1^{u_1-l^+_1}\cdots\lambda_N^{u_N-l^+_N}}{(u_1-l^+_1)!\cdots(u_N-l^+_N)!}, \\
\prod_{l_i<0}\bigg(\frac{\partial}{\partial \lambda_i}\bigg)^{-l_i}(F_\gamma) &= \sum_{v\in U_2(\gamma)} \frac{\lambda_1^{v_1-l^-_1}\cdots\lambda_N^{v_N-l^-_N}}{(v_1-l^-_1)!\cdots(v_N-l^-_N)!}.
\end{align}
Note that we have a one-to-one correspondence between the sets $U_1(\gamma)$ and $U_2(\gamma)$:  if $u\in U_1(\gamma)$, then $v=u-l\in U_2(\gamma)$.  The inverse of this map is given by: send $v\in U_2(\gamma)$ to $v+l\in U_1(\gamma)$.  Furthermore, if $u\in U_1(\gamma)$ and $v\in U_2(\gamma)$ are related under this correspondence, then
\[ u_i- l_i^+ = v_i-l^-_i\quad\text{for $i=1,\dots,N$}. \]
This shows that the right-hand sides of (3.4) and (3.5) are equal, and (3.3) follows immediately.  We have established that $F_\gamma$ is a solution of the system (2.2), (2.3).

The functions $\{F_\gamma\}_{\gamma\in\sigma_A(\beta)}$ are polynomials in $\lambda_1,\dots,\lambda_N$ of degree $\leq p-1$ in each variable and the sets $\{U^+_{\rm min}(\gamma)\}_{\gamma\in\sigma_A(\beta)}$ are mutually disjoint, hence the set $\{F_\gamma\}_{\gamma\in\sigma_A(\beta)}$ is linearly independent over ${\mathbb F}_p((\lambda_1^p,\dots,\lambda_N^p))$.  It remains to show that every solution of (2.2), (2.3) is an ${\mathbb F}_p((\lambda_1^p,\dots,\lambda_N^p))$-linear combination of these polynomials.

Let $G$ be any solution of (2.2), (2.3) in ${\mathbb F}_p((\lambda_1,\dots,\lambda_N))$.  Then $G$ is a (possibly infinite) sum of expressions of the form $\lambda_1^{pk_1}\cdots\lambda_N^{pk_N}G_{k_1\dots k_N}$ ($k_1,\dots,k_N\in{\mathbb Z}$), where $G_{k_1\dots k_N}$ is a polynomial of degree $\leq p-1$ in each variable $\lambda_1,\dots,\lambda_N$.  Furthermore,  $G$ is a solution of (2.2), (2.3) if and only if each $G_{k_1\dots k_N}$ is.  So we may take $G$ to be a polynomial of degree $\leq p-1$ in each variable.  To satisfy~(2.2), each monomial $\lambda_1^{k_1}\cdots \lambda_N^{k_N}$ in $G$ must satisfy
\[ \sum_{i=1}^N k_i{\bf a}_i \equiv\beta\pmod{p{\mathbb Z}^n}. \]
It follows that we may write
\[ G = \sum_{j=1}^J \sum_{u\in U^+_{p-1}(\gamma_j)} c_u\lambda_1^{u_1}\cdots\lambda_N^{u_N}, \]
where $\gamma_1,\dots,\gamma_J\equiv\beta\pmod{p{\mathbb Z}^n}$. 
Put
\[ G_{\gamma_j} = \sum_{u\in U^+_{p-1}(\gamma_j)}c_u\lambda_1^{u_1}\cdots\lambda_N^{u_N}. \]
Each $G_{\gamma_j}$ satisfies (2.2).  Suppose $l\in L$ and $\sum_{i=1}^N l_i>0$.  The corresponding box operator, 
\[ \overline{\Box}_l = \prod_{l_i>0} \bigg(\frac{\partial}{\partial \lambda_i}\biggr)^{l_i}, \]
annihilates $G$ if and only if it annihilates each monomial in $G$; so it annihilates $G$ if and only if it annihilates each $G_{\gamma_j}$.  A similar argument applies when $\sum_{i=1}^N l_i<0$, so assume $\sum_{i=1}^N l_i = 0$.  The corresponding box operator is 
\[ \overline{\Box}_l = \prod_{l_i>0}\bigg(\frac{\partial}{\partial\lambda_i}\bigg)^{l_i} - \prod_{l_i<0} \bigg(\frac{\partial}{\partial\lambda}_i\bigg)^{-l_i}. \]
It is straightforward to check that the monomials appearing in the $\overline{\Box}_l(G_{\gamma_j})$ form disjoint sets for $j=1,\dots,J$.  It follows that $G$ satisfies (2.3) if and only if each $G_{\gamma_j}$ does.

We are finally reduced to the case where
\[ G = \sum_{u\in U^+_{p-1}(\gamma)} c_u\lambda_1^{u_1}\cdots\lambda_N^{u_N} \]
for some $\gamma\equiv\beta\pmod{p{\mathbb Z}^n}$.  We claim that $c_u = 0$ if $u\not\in U^+_{\rm min}(\gamma)$.  Pick $v\in U^+_{\rm min}(\gamma)$ and take $l=u-v\in L$.  If $u\not\in U^+_{\rm min}(\gamma)$, then $\sum_{i=1}^N u_i-v_i>0$, so the corresponding box operator is
\[ \overline{\Box}_l = \prod_{u_i>v_i} \bigg(\frac{\partial}{\partial\lambda_i}\bigg)^{u_i-v_i}. \]
Since $u_i-v_i\leq u_i$ for all $i$, it is clear that $\overline{\Box}_l(c_u\lambda_1^{u_1}\cdots\lambda_N^{u_N})\neq 0$ unless $c_u = 0$.

We now have
\begin{equation}
G = \sum_{u\in U^+_{p-1}(\gamma)\cap U^+_{\rm min}(\gamma)} c_u \lambda_1^{u_1}\cdots\lambda_N^{u_N}, 
\end{equation}
and we need to show that $G=0$ unless $\gamma$ is good (i.e., $U^+_{\rm min}(\gamma)\subseteq U^+_{p-1}(\gamma)$), and that when $\gamma$ is good, $G$ is a scalar multiple of the function $F_\gamma$ defined earlier.  If $U^+_{p-1}(\gamma)\cap U^+_{\rm min}(\gamma)=\emptyset$ then $G=0$.  If $U^+_{\rm min}(\gamma)$ is a singleton, then either $U^+_{\rm min}(\gamma)\subseteq U^+_{p-1}(\gamma)$ (so $\gamma$ is good and $G$ is clearly a scalar multiple of $F_\gamma$) or $U^+_{p-1}(\gamma)\cap U^+_{\rm min}(\gamma)=\emptyset$ (so $G=0$).  

So suppose $U^+_{\rm min}(\gamma)$ has at least two elements and $U^+_{p-1}(\gamma)\cap U^+_{\rm min}(\gamma)\neq\emptyset$.  Let $u\in U^+_{p-1}(\gamma)\cap U^+_{\rm min}(\gamma)$ and let $v\in U^+_{\rm min}(\gamma)$ (for notational convenience, we set $c_v = 0$ if $v\not\in U^+_{p-1}(\gamma)$).  Put $l=u-v\in L$.  Since $\sum_{i=1}^N u_i-v_i=0$, the corresponding box operator is
\[ \overline{\Box}_l = \prod_{u_i>v_i}\bigg(\frac{\partial}{\partial\lambda_i}\bigg)^{u_i-v_i} - \prod_{u_i<v_i} \bigg(\frac{\partial}{\partial\lambda}_i\bigg)^{v_i-u_i}. \]
The coefficient of $\prod_{i=1}^N \lambda_i^{\min\{u_i,v_i\}}$ in $\overline{\Box}_l(G)$ is
\begin{equation}
c_u\prod_{u_i>v_i}u_i(u_i-1)\cdots(v_i+1) - c_v\prod_{u_i<v_i}v_i(v_i-1)\cdots(u_i+1). \end{equation}
Since $u_i\leq p-1$ for $i=1,\dots,N$, the coefficient of $c_u$ in this expression is $\neq 0$.  If $\gamma$ is not good, then there exists $v\in U^+_{\rm min}(\gamma)$ such that $v\not\in U^+_{p-1}(\gamma)$.  Since $c_v=0$ and since $\overline{\Box}_l(G)=0$ implies the vanishing of (3.7), it follows that $c_u=0$.  Since $u$ was an arbitrary element of $U^+_{p-1}(\gamma)\cap U^+_{\rm min}(\gamma)$, we conclude that $G=0$.  If $\gamma$ is good, then $U^+_{\rm min}(\gamma)\subseteq U^+_{p-1}(\gamma)$, so the sum in (3.6) is over $U^+_{\rm min}(\gamma)$.  If $u,v\in U^+_{\rm min}(\gamma)$, then the coefficients of $c_u$ and $c_v$ in (3.7) are both $\neq 0$.  Equation (3.7) and the vanishing of $\overline{\Box}_l(G)$ then imply that the value of $c_u$ for one $u\in U^+_{\rm min}(\gamma)$ determines the values of $c_v$ for all $v\in U^+_{\rm min}(\gamma)$.  This proves that the space of solutions of the form (3.6) is one-dimensional, hence $G$ is a scalar multiple of $F_\gamma$.
\end{proof}

{\bf Example 1:}  Let $A=\{{\bf a}_1,{\bf a}_2,{\bf a}_3,{\bf a}_4\}\subseteq{\mathbb Z}^3$, where ${\bf a}_1 = (1,0,0)$, ${\bf a}_2  = (0,1,0)$, ${\bf a}_3 = (0,0,1)$, and ${\bf a}_4 = (1,1,-1)$.  Then $(-1/2, -1/2, 0)$ is $p$-integral for any odd prime $p$ and taking $\beta = ((p-1)/2, (p-1)/2,0)\in{\mathbb Z}^3$ we have $\beta\equiv(-1/2,-1/2,0)\pmod{p}$.  The nonnegative integer solutions of the system of equations
\[ \sum_{i=1}^4 c_i{\bf a}_i = \bigg(\frac{p-1}{2},\frac{p-1}{2},0\bigg) \]
are given by
\[ c_1 = c_2 = \frac{p-1}{2}-l,\quad c_3=c_4 = l,\quad\text{for $l=0,1,\dots,\frac{p-1}{2}$.} \]
We thus have $U^+(\beta) = U^+_{p-1}(\beta) = U^+_{\min}(\beta)$, $w(\beta) = p-1$, and
\[ U^+_{\min}(\beta) = 
\bigg\{\bigg(\frac{p-1}{2}-l,\frac{p-1}{2}-l,l,l\bigg)\mid l\in\{0,1,\dots,(p-1)/2\}\bigg\}.  \]
In particular, $\beta$ is very good and
\[ F_\beta(\lambda) = \sum_{l=0}^{(p-1)/2} \frac{\lambda_1^{(p-1)/2-l}\lambda_2^{(p-1)/2-l}\lambda_3^l\lambda_4^l} 
{((p-1)/2-l)!^2\,l!^2}. \]
This can be simplified by multiplying by $(-1)^{(p+1)/2}((p-1)/2)!^2$ ($\equiv 1\pmod{p}$) to give
\[ F_\beta(\lambda) = -(-\lambda_1\lambda_2)^{(p-1)/2}\sum_{l=0}^{(p-1)/2}\binom{(p-1)/2}{l}^2\bigg(\frac{\lambda_3\lambda_4}{\lambda_1\lambda_2}\bigg)^l. \]

\section{Mod $p$ solutions and $A$-hypergeometric series}

The results of this section are not needed elsewhere in this paper, however, we include them with a view to future applications.

Fix $\beta\in{\mathbb Z}^n$ for which $(\beta+p{\mathbb Z}^n)\cap{\mathbb N}A\neq\emptyset$.  By Lemma~2.6 we may choose $\gamma\in
(\beta+p{\mathbb Z}^n)\cap{\mathbb N}A$ to be good.  Consider the associated solution
\[ F_\gamma(\lambda) = \sum_{u\in U^+_{\min}(\gamma)} \frac{\lambda_1^{u_1}\cdots\lambda_N^{u_N}}{u_1!\cdots u_N!} \]
of the mod $p$ system (2.2), (2.3).  We shall compare $F_\gamma$ with a formal solution of the $A$-hypergeometric system in characteristic~0 consisting of the operators
\begin{equation}
\sum_{j=1}^N a_{ji}\lambda_j\frac{\partial}{\partial\lambda_j} - \frac{\gamma}{1-p}\quad\text{for $i=1,\dots,n$}
\end{equation}
and
\begin{equation}
\pi^{-\sum_{l_i>0} l_i}\prod_{l_i>0}\bigg(\frac{\partial}{\partial\lambda_i}\bigg)^{l_i} - \pi^{\sum_{l_i<0}l_i}\prod_{l_i<0} \bigg(\frac{\partial}{\partial\lambda_i}\bigg)^{-l_i}\quad\text{for $l\in L$.}
\end{equation}

Fix an element $u^{(0)}\in U^+_{\min}(\gamma)$ and define $v^{(0)} = u^{(0)}/(1-p)$.  Then we have
\begin{equation}
\sum_{i=1}^N v^{(0)}_i{\bf a}_i = \frac{\gamma}{1-p}
\end{equation}
and, since $\gamma$ is good, we have
\[ -1\leq v^{(0)}_i\leq 0\quad\text{for $i=1,\dots,N$.} \]
Recall that the {\it negative support\/} of a vector $x=(x_1,\dots,x_N)\in{\mathbb C}^N$ is defined as ${\rm nsupp}(x) = \{i\in\{1,\dots,N\}\mid x_i\in{\mathbb Z}_{<0}\}$.
We make the hypothesis that $v^{(0)}$ has minimal negative support in the sense of \cite[Section~3.4]{SST}, i.e., that there is no element $l\in L$ such that ${\rm nsupp}(v^{(0)}+l)$ is a proper subset of ${\rm nsupp}(v^{(0)})$. Put
\[ N_{v^{(0)}} = \{l\in L\mid {\rm nsupp}(v^{(0)} + l) = {\rm nsupp}(v^{(0)})\}. \]
Since Eq.~(4.2) is obtained from the usual box operators by replacing each variable $\lambda_i$ by $\pi\lambda_i$, this implies (see \cite[Proposition~3.4.13]{SST}) that the formal series
\begin{equation}
\phi_{v^{(0)}}(\lambda) = \sum_{l\in N_{v^{(0)}}} \frac{[v^{(0)}]_{l_-}}{[v^{(0)}+l]_{l_+}} (\pi\lambda)^{v^{(0)}+l} 
\end{equation}
is a solution of the system (4.1), (4.2), where
\[ [v^{(0)}]_{l_-} = \prod_{l_i<0} \prod_{j=1}^{-l_i} (v^{(0)}_i-j+1) \]
and
\[ [v^{(0)}+l]_{l_+} = \prod_{l_i>0} \prod_{j=1}^{l_i} (v^{(0)}_i+j). \]

Note that 
\[ (\pi\lambda)^{-pv^{(0)}}\phi_{v^{(0)}}(\lambda) = \sum_{l\in N_{v^{(0)}}} \frac{[v^{(0)}]_{l_-}}{[v^{(0)}+l]_{l_+}} (\pi\lambda)^{u^{(0)}+l} \]
is a formal series where all powers of $\lambda$ lie in ${\mathbb Z}^N$.  Let $G_{v^{(0)}}(\lambda)\in{\mathbb Q}[\pi][\lambda]$  denote the truncation of $(\pi\lambda)^{-pv^{(0)}}\phi_{v^{(0)}}(\lambda)$ obtained by eliminating all terms except those containing monomials $\lambda_1^{j_1}\cdots\lambda_N^{j_N}$ satisfying $0\leq j_i\leq p-1$ for $i=1,\dots,N$.
More precisely, if we let 
\[ N_{v^{(0)}}' = \{l\in N_{v^{(0)}}\mid u^{(0)}+l\in U^+_{p-1}(\gamma)\}, \] 
then 
\begin{equation}
G_{v^{(0)}}(\lambda) = \sum_{l\in N_{v^{(0)}}'} \frac{[v^{(0)}]_{l_-}}{[v^{(0)}+l]_{l_+}} (\pi\lambda)^{u^{(0)}+l}.
\end{equation}

\begin{proposition}
If $v^{(0)}$ has minimal negative support, then $\pi^{-w(\gamma)}G_{v^{(0)}}(\lambda)$ has $p$-integral coefficients and
\[ \pi^{-w(\gamma)}G_{v^{(0)}}(\lambda)\equiv (u^{(0)}_1!)\cdots (u^{(0)}_N!)F_\gamma(\lambda)\pmod{\pi}. \]
\end{proposition}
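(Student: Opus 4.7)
The plan is to expand each term of $G_{v^{(0)}}(\lambda)$ explicitly and identify which ones survive modulo $\pi$ after division by $\pi^{w(\gamma)}$. Writing $u:=u^{(0)}+l$, the generic term of $G_{v^{(0)}}$ reads
\[ \frac{[v^{(0)}]_{l_-}}{[v^{(0)}+l]_{l_+}}\,\pi^{\sum_i u_i}\,\lambda^{u}. \]
Since $u^{(0)}\in U^+_{\min}(\gamma)$ we have $\sum_i u_i=w(\gamma)+\sum_i l_i$, and the condition $u\in U^+_{p-1}(\gamma)$ forces $\sum_i l_i\geq 0$. Hence $\pi^{-w(\gamma)}$ times the term is $\pi^{\sum_i l_i}\cdot(\text{scalar})\cdot\lambda^u$; reduction mod $\pi$ kills every term with $\sum_i l_i>0$ and retains exactly those $l$ for which $u\in U^+_{\min}(\gamma)$.

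The next step is to reduce the scalar $[v^{(0)}]_{l_-}/[v^{(0)}+l]_{l_+}$ modulo $p$. Since $1/(1-p)\equiv 1\pmod p$, one has $v^{(0)}_i\equiv u^{(0)}_i\pmod p$, so $v^{(0)}_i+j\equiv u^{(0)}_i+j$ and $v^{(0)}_i-j+1\equiv u^{(0)}_i-j+1$. Collapsing the telescoping products yields
\[ [v^{(0)}+l]_{l_+}\equiv\prod_{l_i>0}\frac{(u^{(0)}_i+l_i)!}{u^{(0)}_i!}, \qquad [v^{(0)}]_{l_-}\equiv\prod_{l_i<0}\frac{u^{(0)}_i!}{(u^{(0)}_i+l_i)!}\pmod p. \]
The point is that goodness of $\gamma$ together with $u\in U^+_{p-1}(\gamma)$ gives $0\leq u^{(0)}_i,u^{(0)}_i+l_i\leq p-1$, so every factor appearing in these products lies in $\{1,\dots,p-1\}$; thus each denominator is a $p$-unit (giving the asserted $p$-integrality) and one obtains the clean formula
\[ \frac{[v^{(0)}]_{l_-}}{[v^{(0)}+l]_{l_+}}\equiv\prod_{i=1}^N\frac{u^{(0)}_i!}{u_i!}\pmod p, \]
with the $l_i=0$ factors contributing trivially.

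To finish, I would verify that the assignment $u\mapsto l=u-u^{(0)}$ sends $U^+_{\min}(\gamma)$ into $N'_{v^{(0)}}$, so that every minimal $u$ genuinely appears. Since $l\in L$ and $u^{(0)}+l=u\in U^+_{p-1}(\gamma)$ by construction, what must be checked is ${\rm nsupp}(v^{(0)}+l)={\rm nsupp}(v^{(0)})$; a per-coordinate case analysis (separating $u^{(0)}_i=0$, $u^{(0)}_i=p-1$, and $0<u^{(0)}_i<p-1$, in which last case $v^{(0)}_i$ is a non-integer so $v^{(0)}_i+l_i$ cannot be in $\mathbb{Z}_{<0}$) shows the identity follows automatically from $0\leq u_i\leq p-1$. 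Summing the contributions from the surviving $l$ gives
\[ \pi^{-w(\gamma)}G_{v^{(0)}}(\lambda)\equiv\sum_{u\in U^+_{\min}(\gamma)}\prod_i\frac{u^{(0)}_i!}{u_i!}\,\lambda^u=u^{(0)}_1!\cdots u^{(0)}_N!\,F_\gamma(\lambda)\pmod\pi, \]
as required. The main technical obstacle is the bookkeeping for the modular reduction of $[v^{(0)}]_{l_-}$ and $[v^{(0)}+l]_{l_+}$; the minimal-negative-support hypothesis itself is invoked only to guarantee that $\phi_{v^{(0)}}$ is a formal solution (via \cite[Prop.~3.4.13]{SST}) and enters the reduction only through the index set $N_{v^{(0)}}$.
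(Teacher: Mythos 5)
Your proposal is correct and follows essentially the same route as the paper: identify the index set $N'_{v^{(0)}}$ with $\{l\in L\mid u^{(0)}+l\in U^+_{p-1}(\gamma)\}$, observe that each coefficient is a $p$-adic unit times $\pi^{w(\gamma)+\sum_i l_i}$ with $\sum_i l_i\geq 0$ (so the non-minimal terms die mod $\pi$), and reduce the surviving units to the factorial ratio $\prod_i u^{(0)}_i!/u_i!$ via $v^{(0)}_i\equiv u^{(0)}_i\pmod p$. The only (minor, and correct) variation is that you verify ${\rm nsupp}(v^{(0)}+l)={\rm nsupp}(v^{(0)})$ directly by the coordinate-wise case analysis, whereas the paper proves only the containment ${\rm nsupp}(v^{(0)}+l)\subseteq{\rm nsupp}(v^{(0)})$ and then invokes the minimal-negative-support hypothesis to get equality.
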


\begin{proof}
Suppose that $u^{(0)}+l\in U^+(\gamma)$.  We claim that $l\in N_{v^{(0)}}$.  Suppose that for some $i$, $v^{(0)}_i+l_i\in{\mathbb Z}_{<0}$ but $v_i^{(0)}\not\in{\mathbb Z}_{<0}$.  Then we must have $v^{(0)}_i=0$ and $l_i<0$.  But $v^{(0)}_i=0$ implies $u^{(0)}_i=0$ so $u^{(0)}_i+l_i<0$, contradicting $u^{(0)}+l\in U^+(\gamma)$.  It follows that ${\rm nsupp}(v^{(0)}+l)\subseteq{\rm nsupp}(v^{(0)})$, and they must be equal since $v^{(0)}$ is assumed to have minimal negative support.  

It follows that $N'_{v^{(0)}} = \{l\in L\mid u^{(0)}+l\in U^+_{p-1}(\gamma)\}$.  We can thus write $G_{v^{(0)}}(\lambda)$ as the sum of two polynomials $G_1(\lambda)$ and $G_2(\lambda)$, where $G_1(\lambda)$ is the sum of those terms on the right-hand side of~(4.5) with $u^{(0)}+l\in U^+_{\min}(\gamma)$ and $G_2(\lambda)$ is the sum of those terms with $u^{(0)}+l\in U^+_{p-1}(\gamma)\setminus U^+_{\min}(\gamma)$.  We shall establish the proposition by showing that both $\pi^{-w(\gamma)}G_1(\lambda)$ and $\pi^{-w(\gamma)}G_2(\lambda)$ have $p$-integral coefficients and that
\begin{align}
\pi^{-w(\gamma)}G_2(\lambda)  &\equiv 0\pmod{\pi}, \\
\pi^{-w(\gamma)}G_1(\lambda) &\equiv (u^{(0)}_1!)\cdots (u^{(0)}_N!)F_\gamma(\lambda)\pmod{\pi}. 
\end{align}

Note that in the $p$-adic integers ${\mathbb Z}_p$ we have the equality
\begin{equation}
v^{(0)}_i = u^{(0)}_i + u^{(0)}_i p + u^{(0)}_i p^2 + \cdots. 
\end{equation}
If $l\in N'_{v^{(0)}}$ has $l_i\geq 0$, then $u^{(0)}_i+l_i\leq p-1$ implies $l_i\leq p-1-u^{(0)}_i$.  It then follows from~(4.9) that $\prod_{j=1}^{l_i}(v^{(0)}_i+j)$ is a $p$-adic unit, hence $[v^{(0)}+l]_{l_+}$ is a $p$-adic unit.
If $l\in N'_{v^{(0)}}$ has $l_i<0$, then $u^{(0)}_i+l_i\geq 0$ implies $l_i\geq -u_i^{(0)}$.  It follows from (4.9) that $\prod_{j=1}^{-l_i}(v^{(0)}_i-j+1)$ is a $p$-adic unit, hence $[v^{(0)}]_{l_-}$ is a $p$-adic unit.  This proves that the coefficient of $\lambda^{u^{(0)}+l}$ in $G_{v^{(0)}}(\lambda)$ is $p$-integral and divisible by $\pi^{\sum_{i=1}^N u^{(0)}_i+l_i}$.  If $u^{(0)}+l\not\in U^+_{\min}(\gamma)$, then $\sum_{i=1}^N u^{(0)}_i+l_i>w(\gamma)$, which establishes~(4.7).  To prove~(4.8), we must show that for $u^{(0)}+l\in U^+_{\min}(\gamma)$, 
\begin{equation}
\frac{[v^{(0)}]_{l_-}}{[v^{(0)}+l]_{l_+}}\equiv \frac{(u^{(0)}_1!)\cdots (u^{(0)}_N!)}{(u^{(0)}_1+l_1)!\cdots (u^{(0)}_N+l_N)!} \pmod{p}.
\end{equation}
If $l_i<0$, then by (4.9)
\begin{align*}
\prod_{j=1}^{-l_i} (v^{(0)}_i-j+1) &\equiv \prod_{j=1}^{-l_i} (u^{(0)}_i-j+1)\pmod{p} \\
 &= \frac{u^{(0)}_i!}{(u^{(0)}_i+l_i)!},
\end{align*}
and if $l_i>0$, then by (4.9)
\begin{align*}
\prod_{j=1}^{l_i} (v^{(0)}_i+j) &\equiv \prod_{j=1}^{l_i} (u^{(0)}_i+j)\pmod{p} \\
 &= \frac{(u^{(0)}_i+l_i)!}{u^{(0)}_i!}.
\end{align*}
These congruences imply (4.8).  Note that the coefficients of $\pi^{-w(\gamma)}G_1(\lambda)$ lie in ${\mathbb Q}\cap{\mathbb Z}_p$, so~(4.8) is actually a congruence mod $p$.
\end{proof}

{\bf Example 1(cont.):}  We maintain the notation of Example 1 from the previous section.  The lattice $L$ is given by $L=\{(-l,-l,l,l)\mid l\in{\mathbb Z}\}$.  Choose $u^{(0)} = ((p-1)/2,(p-1)/2,0,0)$, so that $v^{(0)} = (-1/2,-1/2,0,0)$.   Then $v^{(0)}$ has minimal negative support and the solution (4.4) of the system (4.1), (4.2) (with $\gamma/(1-p) = (-1/2, -1/2,0)$) is (using the Pochhammer notation $(x)_l = x(x+1)\cdots (x+l-1)$)
\[ \phi_{v^{(0)}}(\lambda) = \pi^{-1}\lambda_1^{-1/2}\lambda_2^{-1/2}\sum_{l=0}^\infty \bigg(\bigg(\frac{1}{2}\bigg)_{\!l}\bigg/l\,!\bigg)^2  \bigg(\frac{\lambda_3\lambda_4}{\lambda_1\lambda_2}\bigg)^l. \]
This gives
\[ (\pi\lambda)^{-pv^{(0)}}\phi_{v^{(0)}}(\lambda) = \pi^{p-1}\lambda_1^{(p-1)/2}\lambda_2^{(p-1)/2} \sum_{l=0}^\infty \bigg(\bigg(\frac{1}{2}\bigg)_{\!l}\bigg/l\,!\bigg)^2  \bigg(\frac{\lambda_3\lambda_4}{\lambda_1\lambda_2}\bigg)^l, \]
hence
\[ G_{v^{(0)}}(\lambda) = \pi^{p-1} (\lambda_1\lambda_2)^{(p-1)/2}\sum_{l=0}^{(p-1)/2}\bigg(\bigg(\frac{1}{2}\bigg)_{\!l}\bigg/l\,!\bigg)^2 \bigg(\frac{\lambda_3\lambda_4}{\lambda_1\lambda_2}\bigg)^l. \]
The assertion of Proposition 4.6 thus reduces to the congruence (see the formula for $F_\gamma(\lambda)$ in Example~1)
\begin{multline*}
 (\lambda_1\lambda_2)^{(p-1)/2}\sum_{l=0}^{(p-1)/2}\bigg(\bigg(\frac{1}{2}\bigg)_{\!l}\bigg/l\,!\bigg)^2 \bigg(\frac{\lambda_3\lambda_4}{\lambda_1\lambda_2}\bigg)^l\equiv \\
((p-1)/{2})!^2\sum_{l=0}^{(p-1)/2} \frac{\lambda_1^{(p-1)/2-l}\lambda_2^{(p-1)/2-l}\lambda_3^l\lambda_4^l} {(\frac{p-1}{2}-l)!^2\,l!^2}.
\end{multline*}

{\bf Remark:}  It would be interesting to know when the coefficients of the series~(4.4) are $p$-integral, as they are in this example (except for the factor $\pi^{-1}$).

\section{The $p$-weight of a set of lattice points}

Fix $q=p^a$.  In Section 3 we showed that good lattice points correspond to mod $p$ solutions of an $A$-hypergeometric system.  In this section we show that lattice points minimizing the ``$p$-weight'' of a set of lattice points satisfying condition (5.5) below give rise to sequences of length $a$ of good lattice points (Proposition~5.6 below).

Let $U^+_{q-1}=\{u=(u_1,\dots,u_N)\mid 0\leq u_i\leq q-1\text{ for $i=1,\dots,N$}\}$.  For each $u\in U^+_{q-1}$, we define $u^{(0)},\dots,u^{(a-1)}\in\{0,1,\dots,p-1\}^N$, $u^{(k)} = (u^{(k)}_1,\dots,u^{(k)}_N)$, by writing
\begin{equation}
u_i = u^{(0)}_i + u^{(1)}_i p +\cdots+ u^{(a-1)}_i p^{a-1} \quad\text{for $i=1,\dots,N$} 
\end{equation}
and we define
\begin{equation}
\gamma_k = \sum_{i=1}^N u^{(k)}_i{\bf a}_i\quad\text{for $k=0,\dots,a-1$.}
\end{equation}
Note that
\begin{equation}
\sum_{k=0}^{a-1} p^k\gamma_k = \sum_{i=1}^N u_i{\bf a}_i. 
\end{equation}
Define the $p$-{\it weight\/} of $u$, $w_p(u)$, to~be
\[ w_p(u) = \sum_{i=1}^N \sum_{k=0}^{a-1} u^{(k)}_i. \]
Note that since $w(\gamma_k)\leq\sum_{i=1}^N u^{(k)}_i$ with equality holding if and only if $u^{(k)}\in U^+_{\min}(\gamma_k)$ we have
\begin{equation}
w_p(u)\geq \sum_{k=0}^{a-1} w(\gamma_k),
\end{equation}
with equality holding if and only if $u^{(k)}\in U^+_{\min}(\gamma_k)$ for all $k$.

Let $M\subseteq{\mathbb Z}^n$ be a subset satisfying the condition:  For ${\bf a} = \sum_{i=1}^N c_i{\bf a}_i\in{\mathbb N}A$, 
\begin{equation}
\text{if ${\bf a}\in M$ and $c_{i_0}\geq q$ for some $i_0$, then ${\bf a}-(q-1){\bf a}_{i_0}\in M$}
\end{equation}
and let $U_M = \{u\in U^+_{q-1}\mid \sum_{i=1}^N u_i{\bf a}_i\in M\}$.  (In the next section we shall apply the results of this section taking $M={\bf e} + (q-1){\mathbb Z}^n$ for a certain choice of ${\bf e}\in{\mathbb Z}^n$.)  We assume for the remainder of this section that $U_M\neq\emptyset$ and we define the {\it $p$-weight\/} of $M$, $w_p(M)$, by
\[ w_p(M) = \min\{w_p(u)\mid u\in U_M\}.  \]
Put $U_{M,\min} = \{u\in U_M\mid w_p(u) = w_p(M)\}$.  The main result of this section is the following assertion.

\begin{proposition}
Let $u\in U_{M,\min}$ and let $\{u^{(k)}\}_{k=0}^{a-1}$ and $\{\gamma_k\}_{k=0}^{a-1}$ be defined by~$(5.1)$ and~$(5.2)$, respectively.  Then for $k=0,\dots,a-1$, $\gamma_k$ is good and $u^{(k)}\in U^+_{\min}(\gamma_k)$.
\end{proposition}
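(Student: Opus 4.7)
The plan is to argue by contradiction: assume $u \in U_{M,\min}$ but that for some index $k_0$ either $u^{(k_0)} \notin U^+_{\min}(\gamma_{k_0})$ or $\gamma_{k_0}$ is not good, and construct $\tilde u^{\#} \in U_M$ with $w_p(\tilde u^{\#}) < w_p(u)$. The technical heart is a digit-sum lemma that controls the effect of the reduction built into~(5.5) on the base-$p$ digit sum $s_p(n) := \sum_j n^{(j)}$, noting that $w_p(v)= \sum_i s_p(v_i)$ for $v \in U^+_{q-1}$.

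Specifically, I would first prove: for every integer $n \geq 0$, if $n^{\#}$ is the result of iteratively subtracting $q-1$ while the value is $\geq q$, then $s_p(n^{\#}) \leq s_p(n)$. The argument is by strong induction on $n$. When $n < q$ the statement is trivial. When $n \geq q$, write $n = \sum_{j\ge 0} n_j q^j$ in base $q$; since $q \equiv 1 \pmod{q-1}$ one has $n \equiv S := \sum_j n_j \pmod{q-1}$, so $n^{\#} = S^{\#}$ (both being the unique element of $\{1,\dots,q-1\}$ in that residue class). Because $S < n$ (each $n_j q^j \geq n_j$, strictly when $j\geq 1$ and $n_j\neq 0$), induction gives $s_p(S^{\#}) \leq s_p(S)$; combined with the subadditivity $s_p(S)\leq \sum_j s_p(n_j)$ and the identity $s_p(n)=\sum_j s_p(n_j)$ (since each $n_j < q$ occupies a disjoint block of $a$ base-$p$ positions in $n$), this yields $s_p(n^{\#}) \leq s_p(n)$.

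Next, from the negation of the conclusion I would produce a vector $v \in U^+(\gamma_{k_0})$ with $\sum_i s_p(v_i) < \sum_i u_i^{(k_0)}$. If $u^{(k_0)} \notin U^+_{\min}(\gamma_{k_0})$, take $v \in U^+_{\min}(\gamma_{k_0})$: then $\sum_i s_p(v_i)\leq \sum_i v_i = w(\gamma_{k_0}) < \sum_i u_i^{(k_0)}$. Otherwise all $u^{(k)} \in U^+_{\min}(\gamma_k)$ but some $\gamma_{k_0}$ is not good, so pick $v \in U^+_{\min}(\gamma_{k_0}) \setminus U^+_{p-1}(\gamma_{k_0})$; some $v_{i_0} \geq p$ gives $s_p(v_{i_0}) < v_{i_0}$, and since $\sum_i v_i = w(\gamma_{k_0}) = \sum_i u_i^{(k_0)}$ the strict inequality follows.

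Now set $\tilde u_i = \sum_{j \neq k_0} u_i^{(j)} p^j + v_i p^{k_0}$, so $\sum_i \tilde u_i \mathbf{a}_i = \sum_j p^j \gamma_j = \sum_i u_i \mathbf{a}_i \in M$, and subadditivity of $s_p$ (with $s_p(v_i p^{k_0})=s_p(v_i)$) gives $\sum_i s_p(\tilde u_i) \leq \sum_{j\neq k_0}\sum_i u_i^{(j)} + \sum_i s_p(v_i) < w_p(u)$. Since $\tilde u$ need not lie in $U^+_{q-1}$, I apply~(5.5) iteratively: while some $\tilde u_{i^*} \geq q$, replace $\tilde u_{i^*}$ by $\tilde u_{i^*} - (q-1)$, preserving membership of $\sum \tilde u_i \mathbf{a}_i$ in $M$ by~(5.5). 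This terminates (each step drops $\sum_i \tilde u_i$ by $q-1$) in some $\tilde u^{\#} \in U^+_{q-1}$ with $\sum \tilde u^{\#}_i \mathbf{a}_i \in M$, hence $\tilde u^{\#} \in U_M$; the digit-sum lemma applied coordinatewise gives $w_p(\tilde u^{\#}) \leq \sum_i s_p(\tilde u_i) < w_p(u)$, contradicting $u \in U_{M,\min}$. The main obstacle is the digit-sum lemma: a single reduction step $n \mapsto n - (q-1)$ can strictly increase $s_p$ (e.g.\ $pq$, with $s_p = 1$, maps to $(p-1)q + 1$, with $s_p = p$), so one cannot track $s_p$ step-by-step and must instead exploit the global invariant that $n^{\#}$ lies in the fixed residue class of $n$ modulo $q-1$.
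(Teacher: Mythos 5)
Your proposal is correct, and it reaches the conclusion by a genuinely different mechanism than the paper, even though the overall strategy (contradict minimality of $w_p(u)$ by exhibiting a lighter element of $U_M$, using condition (5.5) to stay in $M$) is the same. The paper never forms the large integer vector $\tilde u$ and reduces it modulo $q-1$; instead it works directly with a generalized base-$p$ digit array $(v^{(k)}_i)$ whose entries may exceed $p-1$, and performs single carries $v^{(k_1)}_{i_1}\mapsto v^{(k_1)}_{i_1}-p$, $v^{(k_1+1)}_{i_1}\mapsto v^{(k_1+1)}_{i_1}+1$ (cyclically in $k$, with the wrap-around step at $k_1=a-1$ being exactly where (5.5) is invoked, since it amounts to subtracting $(q-1){\bf a}_{i_1}$). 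Each such carry visibly lowers the total digit sum by $p-1$, so the paper needs no analogue of your digit-sum lemma; your observation that a single subtraction of $q-1$ can \emph{increase} $s_p$ explains precisely why, once one passes to integers, the global lemma (via base-$q$ expansion and subadditivity of $s_p$) becomes unavoidable, and that lemma is the real novelty of your route — it is a clean, reusable statement that the paper's step-by-step bookkeeping sidesteps. The other difference is organizational: you fold both conclusions into one contradiction scheme, whereas the paper first proves all $\gamma_k$ are good and then gets $u^{(k)}\in U^+_{\min}(\gamma_k)$ by a much shorter second argument — once goodness is known, the comparison vector $v_i=\sum_k v^{(k)}_ip^k$ built from minimal representatives lies in $U^+_{q-1}$ automatically, so no reduction and no digit-sum control are needed for that half. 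Your unified treatment is correct but deploys the heavy machinery even in the case (non-minimality of some $u^{(k_0)}$ with all the relevant data still possibly small) where the paper's direct comparison suffices.
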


\begin{proof}
Let $u\in U_{M,{\min}}$ and suppose that $\gamma_{k_1}$ is not good.  Then there exists $v^{(k_1)}\in U^+_{\min}(\gamma_{k_1})$ with $v^{(k_1)}_{i_1}\geq p$ for some $i_1$.  For $k\neq k_1$ define $v^{(k)} = u^{(k)}$ and set $v=(v_1,\dots,v_N)$, where
\[ v_i = \sum_{k=0}^{a-1} v^{(k)}_ip^k\quad\text{for $i=1,\dots,N$}. \]
Note that
\begin{equation}
\sum_{i=1}^N v_i{\bf a}_i = \sum_{k=0}^{a-1} \bigg(\sum_{i=1}^N v^{(k)}_ip^k\bigg){\bf a}_i = \sum_{k=0}^{a-1} p^k\gamma_k = \sum_{i=1}^N u_i{\bf a}_i\in M
\end{equation}
and that (since $\sum_{i=1}^N v^{(k_1)}_i\leq \sum_{i=1}^N u^{(k_1)}_i$) 
\begin{equation}
\sum_{i=1}^N \sum_{k=0}^{a-1}v^{(k)}_i\leq w_p(u)=w_p(M).
\end{equation}

Suppose first that $k_1<a-1$.  Define 
\begin{equation}
\begin{cases} w^{(k)}_i = v^{(k)}_i & \text{if $k\neq k_1,k_1+1$ or if $i\neq i_1$,} \\
w^{(k_1)}_{i_1} = v^{(k_1)}_{i_1}-p, & \\
w^{(k_1+1)}_{i_1} = v^{(k_1+1)}_{i_1}+1, \end{cases}
\end{equation}
and put
\[ w_i = \sum_{k=0}^{a-1} w^{(k)}_i p^k\quad\text{for $i=1,\dots,N$.} \]
With this definition we have $w_i=v_i$ for $i=1,\dots,N$, so (5.7) implies
\[ \sum_{i=1}^N w_i{\bf a}_i = \sum_{i=1}^N u_i{\bf a}_i\in M. \]
If $k_1 = a-1$, replace $k_1+1$ by $0$ in formula (5.9).  We have $v^{(a-1)}_{i_1}\geq p$, so $v_{i_1}\geq q$.  By (5.9), $w_i=v_i$ for $i\neq i_1$ while $w_{i_1} = v_{i_1} - (q-1)$, so in this case (5.7) implies
\[ \sum_{i=1}^N w_i{\bf a}_i = -(q-1){\bf a}_{i_1} + \sum_{i=1}^N v_i{\bf a}_i\in M \]
by condition (5.5).  Thus in both cases we have $\sum_{i=1}^N w_i{\bf a}_i\in M$.

If $w^{(k_2)}_{i_2}\geq p$ for some $k_2,i_2$, we may repeat the reduction step (5.9).  After finitely many steps we arrive at $\tilde{w}$ with $\tilde{w}^{(k)}_i\leq p-1$ for all $k$ and $i$ and $\sum_{i=1}^N \tilde{w}_i{\bf a}_i\in M$,
hence $\tilde{w}\in U_M$.  But by the construction of $\tilde{w}$ and Eq.~(5.8) we have
\[ w_p(\tilde{w}) = \sum_{i=1}^N \sum_{k=0}^{a-1} \tilde{w}^{(k)}_i<\sum_{i=1}^N\sum_{k=0}^{a-1} v^{(k)}_i\leq w_p(u) = w_p(M). \]
This contradicts the definition of $w_p(M)$, so $\gamma_k$ must be good for all $k$.

It remains to show that $u^{(k)}\in U^+_{\min}(\gamma_k)$ for all $k$.  For each $k$ choose $v^{(k)}\in U^+_{\min}(\gamma_k)$.  Since $\gamma_k$ is good we have $0\leq v^{(k)}_i\leq p-1$ for all $i$ and $k$.  Define
\[ v_i = \sum_{k=0}^{a-1} v^{(k)}_i p^k\quad\text{for $i=1,\dots,N$.} \]
Then $0\leq v_i\leq q-1$ for all $i$ and
\[ \sum_{i=1}^N v_i{\bf a}_i = \sum_{k=0}^{a-1}\bigg(\sum_{i=1}^N v^{(k)}_i p^k\bigg){\bf a}_i = \sum_{k=0}^{a-1} p^k\gamma_k = \sum_{i=1}^N u_i{\bf a}_i, \]
so $v\in U_M$.  Since $v^{(k)}\in U^+_{\min}(\gamma_k)$ for all $k$, we have equality holding in (5.4) for $v$:
\[ w_p(v) = \sum_{k=0}^{a-1} w(\gamma_k). \]
We thus have
\begin{equation}
w_p(v) = \sum_{k=0}^{a-1} w(\gamma_k) \leq \sum_{k=0}^{a-1}\bigg(\sum_{i=1}^N u^{(k)}_i\bigg) = w_p(u).
\end{equation}
Since $u\in U_{M,\min}$, it follows that $v\in U_{M,\min}$ also and that equality holds in~(5.10).  In particular, we must have $w(\gamma_k) = \sum_{i=1}^N u^{(k)}_i$, i.e., $u^{(k)}\in U^+_{\min}(\gamma_k)$, for all $k$.  This completes the proof of Proposition~5.6.
\end{proof}

In summary, we have proved that every $u\in U_{M,\min}$ gives rise (by (5.1) and~(5.2)) to sequences $(u^{(0)},\dots,u^{(a-1)})$ and $(\gamma_0,\dots,\gamma_{a-1})$, with $\gamma_k$ good and $u^{(k)}\in U^+_{\min}(\gamma_k)$ for all $k$, such that (by Eq.~(5.3))
\begin{equation}
\sum_{k=0}^{a-1} p^k\gamma_k\in M.
\end{equation}
Furthermore, equality holds in (5.4), so
\begin{equation}
w_p(M) = \sum_{k=0}^{a-1} w(\gamma_k).
\end{equation}

Conversely, suppose we are given a sequence  $(\gamma_0,\dots,\gamma_{a-1})\in({\mathbb Z}^n)^a$, all $\gamma_k$ good, satisfying (5.11) and (5.12).  Choose $u^{(k)}\in U^+_{\min}(\gamma_k)$ for $k=0,\dots,a-1$ and define $u\in {\mathbb N}^N$ by the formula
\begin{equation}
u_i = \sum_{k=0}^{a-1} u^{(k)}_i p^k \quad\text{for $i=1,\dots,N$.}
\end{equation}
Since all $\gamma_k$ are good we have $0\leq u^{(k)}_i\leq p-1$ for all $k$ and $i$, which implies that $u\in U^+_{q-1}$.  Eq.~(5.11) implies that $u\in U_M$ and Eq.~(5.12) implies that $u\in U_{M,\min}$.

It follows that we can decompose the set $U_{M,\min}$ as follows.  Let $\Gamma_M$ be the set of all sequences $(\gamma_0,\dots,\gamma_{a-1})$ of good elements of ${\mathbb Z}^n$ satisfying (5.11) and (5.12).  For $(\gamma_0,\dots,\gamma_{a-1})\in \Gamma_M$, let $U(\gamma_0,\dots,\gamma_{a-1})$ be the set of all $u$ (necessarily in $U_{M,\min}$ by the previous paragraph) defined by (5.13) with $u^{(k)}\in U^+_{\min}(\gamma_k)$ for $k=0,\dots,a-1$.  We have proved the following result.

\begin{proposition}
There is a decomposition of $U_{M,\min}$ into disjoint subsets:
\[ U_{M,\min} = \bigcup_{(\gamma_0,\dots,\gamma_{a-1})\in\Gamma_M} U(\gamma_0,\dots,\gamma_{a-1}). \]
\end{proposition}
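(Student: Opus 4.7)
My plan is to derive this statement as a bookkeeping consequence of Proposition~5.6 together with the converse construction described in the paragraphs just above it. The work splits naturally into two parts: verifying the set equality $U_{M,\min}=\bigcup_{(\gamma_0,\dots,\gamma_{a-1})\in\Gamma_M} U(\gamma_0,\dots,\gamma_{a-1})$, and then checking that the constituent sets are pairwise disjoint.

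For the containment $\supseteq$, I would quote the converse construction directly: given $(\gamma_0,\dots,\gamma_{a-1})\in\Gamma_M$ and any choice of $u^{(k)}\in U^+_{\min}(\gamma_k)$, the goodness of each $\gamma_k$ yields $u^{(k)}_i\leq p-1$, so the $u$ defined by (5.13) lies in $U^+_{q-1}$; condition (5.11) together with (5.3) places $u$ in $U_M$, and condition (5.12) combined with equality in (5.4) gives $w_p(u)=w_p(M)$, whence $u\in U_{M,\min}$. For the containment $\subseteq$, given $u\in U_{M,\min}$ I would extract $u^{(k)}$ and $\gamma_k$ via (5.1) and (5.2), apply Proposition~5.6 to conclude that each $\gamma_k$ is good and $u^{(k)}\in U^+_{\min}(\gamma_k)$, obtain condition (5.11) from (5.3) together with the definition of $U_M$, and obtain condition (5.12) because equality in (5.4) holds precisely when $u^{(k)}\in U^+_{\min}(\gamma_k)$ for every $k$. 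This places $(\gamma_0,\dots,\gamma_{a-1})$ in $\Gamma_M$ and $u$ in $U(\gamma_0,\dots,\gamma_{a-1})$ by definition.

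For disjointness, I would argue as follows. Suppose $u\in U(\gamma_0,\dots,\gamma_{a-1})\cap U(\gamma_0',\dots,\gamma_{a-1}')$. Each description writes $u_i=\sum_{k=0}^{a-1} u^{(k)}_i p^k$ with digits $u^{(k)}_i\in\{0,\dots,p-1\}$, so uniqueness of base-$p$ expansion forces the two digit sequences $(u^{(0)},\dots,u^{(a-1)})$ to coincide, whence $\gamma_k=\sum_{i=1}^N u^{(k)}_i{\bf a}_i=\gamma_k'$ for all $k$. I do not anticipate a genuine obstacle here: all the substantive content --- goodness of the $\gamma_k$'s, reversibility of the assignment between $u$ and $(\gamma_0,\dots,\gamma_{a-1})$, and the weight identity (5.12) --- has already been established in Proposition~5.6 and the discussion following it, so the remaining task is purely combinatorial assembly.
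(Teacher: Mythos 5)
Your proposal is correct and follows essentially the same route as the paper, which establishes the proposition precisely via Proposition~5.6 for one inclusion and the converse construction using (5.13) for the other; your disjointness argument via uniqueness of the base-$p$ digits (valid because goodness of each $\gamma_k$ forces $u^{(k)}_i\leq p-1$) is the implicit reason the paper can call the union disjoint.
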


{\bf Example 2:}  Let $A=\{{\bf a}_1,{\bf a}_2\}\subseteq{\mathbb Z}$, where ${\bf a}_1=1$ and ${\bf a}_2 = -1$.  For $\beta\in{\mathbb Z}$ we have $U(\beta) = \{(l+\beta,l)\mid l\in{\mathbb Z}\}$ and $U^+(\beta) = \{ (l+\beta,l)\in U(\beta)\mid l\geq\max(0,-\beta)\}$.  It is then clear that $w(\beta)=|\beta|$ and that
\[ U^+_{\min}(\beta) = \begin{cases} \{(\beta,0)\} & \text{if $\beta\geq 0$,} \\ \{(0,-\beta)\} & \text{if $\beta<0$.}
\end{cases} \]
In particular, $\beta$ is good if and only if $-(p-1)\leq\beta\leq(p-1)$ and no $\beta$ is very good.  Suppose that $q=p$ and $M={\bf e}+(p-1){\mathbb Z}$ where ${\bf e}\in{\mathbb Z}$.  In this case it is easy to compute the decomposition of Proposition~5.14.  We may assume $0\leq {\bf e}<p-1$.  Then 
\[ U_{M,\min} = \\ \begin{cases} U^+_{\min}({\bf e})  & \text{if ${\bf e}<(p-1)/2$,} \\ U^+_{\min}({\bf e}-(p-1))  & \text{if ${\bf e}>(p-1)/2$,} \\ U^+_{\min}((p-1)/2)\cup U^+_{\min}(-(p-1)/2) & \text{if ${\bf e} = (p-1)/2$.} \end{cases} \]

Computing the decomposition of Proposition~5.14 is more involved when one is not working over the prime field.  We continue with this example when $q=p^2$ and $M={\bf e}+(p^2-1){\mathbb Z}$ for ${\bf e}\in{\mathbb Z}$.  
We may assume that ${\bf e} = e_0+e_1p$, where $0\leq e_0,e_1\leq p-1$.  Suppose first that ${\bf e} = 0$.  Then 
\[ U_M = \{(u,u)\mid 0\leq u\leq p^2-1\}\cup \{(p^2-1,0)\}\cup \{(0,p^2-1)\}. \]
It follows that $U_{M,\min} = \{(0,0)\}$ and $w_p(M) = 0$.  

Suppose that ${\bf e}\neq 0$.  If $(\gamma_0,\gamma_1)\in\Gamma_M$, then in particular both $\gamma_0$ and $\gamma_1$ are good, so by the above we have
\begin{equation}
-(p-1)\leq \gamma_0,\gamma_1\leq (p-1).
\end{equation}
By (5.11) we have $\gamma_0+p\gamma_1\in{\bf e}+(p^2-1){\mathbb Z}$, so $(5.15)$ implies that
\begin{equation}
\gamma_0 + p\gamma_1 = {\bf e}
\end{equation}
or
\begin{equation}
\gamma_0 +p\gamma_1 = {\bf e}-(p^2-1).
\end{equation}
Equation (5.16) has a solution $(\gamma_0,\gamma_1) = (e_0,e_1)$ satisfying (5.15).  If $e_0\neq 0$ and $e_1\neq p-1$, then $(\gamma_0,\gamma_1) = (e_0-p,e_1+1)$ is also a solution satisfying (5.15).  There are no other solutions of (5.16) satisfying (5.15).  Equation (5.17) has a solution $(\gamma_0,\gamma_1)=(e_0-(p-1), e_1-(p-1))$ satisfying (5.15).  If $e_0\neq p-1$ and $e_1\neq 0$, then $(\gamma_0,\gamma_1) = (e_0+1,e_1-p)$ is also a solution satisfying (5.15).  There are no other solultions of (5.17) satisfying (5.15).  For notational convenience we set
\[ \Gamma_1 = (e_0,e_1),\;\Gamma_2 = (e_0-p,e_1+1),\;\Gamma_3 = (e_0-(p-1),e_1-(p-1)),\;\Gamma_4 = (e_0+1,e_1-p). \]

Since $w(\gamma_i) = |\gamma_i|$, we have $(\gamma_0,\gamma_1)\in\Gamma_M$ if and only if $w_p(M) = |\gamma_0|+|\gamma_1|$.  From the definition of $w_p(M)$, we have $\Gamma_1\in\Gamma_M$ if and only if
\[ e_0+e_1\leq \min\{p-e_0+e_1+1, 2(p-1)-e_0-e_1,e_0+1+p-e_1\}. \]
Simplifying these inequalities gives
\begin{equation}
\text{$\Gamma_1\in\Gamma_M$ if and only if $e_0,e_1\leq(p+1)/2$ and $e_0+e_1\leq p-1$.}
\end{equation}
Similar calculations give
\begin{equation}
\text{$\Gamma_2\in\Gamma_M$ if and only if $e_0\geq(p+1)/2$ and $e_1\leq(p-3)/2$,}
\end{equation}
\begin{equation}
\text{$\Gamma_3\in\Gamma_M$ if and only if $e_0,e_1\geq(p-3)/2$ and $e_0+e_1\geq p-1$,}
\end{equation}
and
\begin{equation}
\text{$\Gamma_4\in\Gamma_M$ if and only if $e_0\leq(p-3)/2$ and $e_1\geq(p+1)/2$.}
\end{equation}

One then calculates that there are eleven possibilities for $\Gamma_M$: \\
{\bf Case 1}:  $\Gamma_M = \{\Gamma_1\}$ if $e_0\leq(p-3)/2$ and $e_1<(p+1)/2$ or if $e_0=(p-1)/2$ and $e_1<(p-1)/2$. \\
{\bf Case 2}:  $\Gamma_M = \{\Gamma_3\}$ if $e_0\geq(p+1)/2$ and $e_1>(p-3)/2$ or if $e_0=(p-1)/2$ and $e_1>(p-1)/2$. \\
{\bf Case 3}:  $\Gamma_M = \{\Gamma_2\}$ if $e_0>(p+1)/2$ and $e_1<(p-3)/2$. \\ 
{\bf Case 4}:  $\Gamma_M = \{\Gamma_4\}$ if $e_0<(p-3)/2$ and $e_1>(p+1)/2$. \\
{\bf Case 5}:  $\Gamma_M = \{\Gamma_1,\Gamma_2\}$ if $e_0=(p+1)/2$ and $e_1<(p-3)/2$. \\
{\bf Case 6}:  $\Gamma_M = \{\Gamma_3,\Gamma_4\}$ if $e_0=(p-3)/2$ and $e_1>(p+1)/2$). \\
{\bf Case 7}:  $\Gamma_M = \{\Gamma_1,\Gamma_3\}$ if $e_0=e_1=(p-1)/2$. \\
{\bf Case 8}:  $\Gamma_M = \{\Gamma_1,\Gamma_4\}$ if $e_0<(p-3)/2$ and $e_1=(p+1)/2$. \\
{\bf Case 9}:  $\Gamma_M = \{\Gamma_2,\Gamma_3\}$ if $e_0>(p+1)/2$ and $e_1=(p-3)/2$. \\
{\bf Case 10}:  $\Gamma_M = \{\Gamma_1,\Gamma_2,\Gamma_3\}$ if $e_0=(p+1)/2$ and $e_1=(p-3)/2$. \\
{\bf Case 11}:  $\Gamma_M = \{\Gamma_1,\Gamma_3,\Gamma_4\}$ if $e_0=(p-3)/2$ and $e_1 = (p+1)/2$.

\section{Hasse invariants}

We make precise the relationship between the additive character $\Psi$ and the uniformizer $\pi$ of the field ${\mathbb Q}_p(\zeta_{q-1},\zeta_p)$.  Fix $\pi$ satisfying $\pi^{p-1} = -p$.  There is a unique $p$-th root of unity $\zeta_p$ with
$\zeta_p \equiv 1 +\pi \pmod{\pi^2}$.  One defines an additive character $\psi_\pi:{\mathbb F}_p\to{\mathbb Q}_p(\zeta_p)^\times$ by requiring $\psi_\pi(1) = \zeta_p$.  The additive character $\Psi:{\mathbb F}_q\to{\mathbb Q}_p(\zeta_{q-1},\zeta_p)^\times$ is defined to be $\Psi = \psi_\pi\circ{\rm Trace}_{{\mathbb F}_q/{\mathbb F}_p}$.

We recall the method of Ax\cite{A}.  Define a polynomial 
\[ P(t) = \sum_{r=0}^{q-1} p_rt^r \in{\mathbb Q}_p(\zeta_{q-1},\zeta_p)[t] \] 
by the conditions
\[ P(\omega(x)) = \Psi(x)\in{\mathbb Q}_p(\zeta_{q-1},\zeta_p) \]
for $x\in{\mathbb F}_q$ (we take $\omega(0)=0$).  One computes that $p_0=1$, $p_{q-1} = -q/(q-1)$, and that for $1\leq r\leq q-2$ one has $p_r = G_r/(q-1)$, where $G_r$ is the Gauss sum
\[ G_r = \sum_{z\in{\mathbb F}_q^\times} \omega(z)^{-r}\Psi(z) \in{\mathbb Q}_p(\zeta_{q-1},\zeta_p). \]
By Stickelberger's Theorem\cite{St} (or see \cite[Theorem 4.5]{L}),
\begin{equation}
p_r \equiv\frac{\pi^{S(r)}}{r_0!r_1!\cdots r_{a-1}!} \pmod{\pi^{S(r)+1}},
\end{equation}
where $r = r_0 +r_1p + \cdots +r_{a-1}p^{a-1}$ with $0\leq r_k\leq p-1$ for $k=0,\dots,a-1$ and $S(r) = r_0+r_1+\cdots+r_{a-1}$.

For a vector ${\bf b} = (b_1,\dots,b_n)\in{\mathbb Z}^n$ we set $\omega(x)^{\bf b} = \omega(x_1)^{b_1}\cdots \omega(x_n)^{b_n}$.  We also adopt the convention that $\omega(0)^b=0$ for $b\neq 0$ but that $\omega(0)^0 = 1$.  Then for the exponential sum (1.1) we have
\begin{align*}
S(f_\lambda,{\bf e}) &= \sum_{x\in({\mathbb F}_q^\times)^n} \omega(x)^{-{\bf e}} \prod_{j=1}^N \Psi(\lambda_jx^{{\bf a}_j}) \\
 &= \sum_{x\in({\mathbb F}_q^\times)^n} \omega(x)^{-{\bf e}} \prod_{j=1}^N P(\omega(\lambda_j)\omega(x)^{{\bf a}_j}) \\
 &= \sum_{x\in({\mathbb F}_q^\times)^n}  \omega(x)^{-{\bf e}}\prod_{j=1}^N \bigg(\sum_{r=0}^{q-1} p_r\omega(\lambda_j)^r\omega(x)^{r{\bf a}_j}\bigg).
\end{align*}
Let $U^+_{q-1}$ be as in Section 5.  Then
\begin{align*}
S(f_\lambda,{\bf e}) &= \sum_{x\in({\mathbb F}_q^\times)^n}  \omega(x)^{-{\bf e}}\sum_{u=(u_1,\dots,u_N)\in U^+_{q-1}}\bigg(\prod_{j=1}^N p_{u_j}\omega(\lambda_j)^{u_j}\bigg)\omega(x)^{\sum_{j=1}^N u_j{\bf a}_j} \\
 &= \sum_{u\in U^+_{q-1}} \bigg(\prod_{j=1}^N p_{u_j}\omega(\lambda_j)^{u_j}\bigg) \bigg(\sum_{x\in({\mathbb F}_q^\times)^n} \omega(x)^{-{\bf e} + \sum_{j=1}^N u_j{\bf a}_j}\bigg).
\end{align*}
One has
\[ \sum_{x\in({\mathbb F}_q^\times)^n} \omega(x)^{-{\bf e} + \sum_{j=1}^N u_j{\bf a}_j} = 
\begin{cases} (q-1)^n & \text{if $-{\bf e}+\sum_{j=1}^N u_j{\bf a}_j\in(q-1){\mathbb Z}^n$,} \\ 0 & \text{otherwise.} \end{cases} \]
Set $M={\bf e} + (q-1){\mathbb Z}^n$, so that $U_M = \{u\in U^+_{q-1}\mid \sum_{j=1}^N u_j{\bf a}_j\in{\bf e}+(q-1){\mathbb Z}^n\}$.  It follows that
\begin{equation}
S(f_\lambda,{\bf e}) = (q-1)^n\sum_{u\in U_M}\prod_{j=1}^N p_{u_j}\omega(\lambda_j)^{u_j}.
\end{equation}

Note that it can happen that $U_M = \emptyset$ (for example, if ${\bf a}_j\in(q-1){\mathbb Z}^n$ for all $j$ but ${\bf e}\not\in(q-1){\mathbb Z}^n$), in which case $S(f_\lambda,{\bf e})=0$ for all $\lambda$.  From now on we assume that $U_M\neq\emptyset$.  

Eq.~(6.1) implies that
\[ \prod_{j=1}^N p_{u_j}\omega(\lambda_j)^{u_j} \equiv\bigg(\frac{\omega(\lambda_1)^{u_1}\cdots\omega(\lambda_N)^{u_N}}{\prod_{j=1}^N u^{(0)}_j!\cdots u^{(a-1)}_j!}\bigg) \pi^{w_p(u)} \pmod{\pi^{w_p(u)+1}} \]
where the $u^{(k)}$ are defined by (5.1), so (6.2) gives
\begin{multline}
S(f_\lambda,{\bf e}) \equiv \\ 
(-1)^n \bigg(\sum_{u\in U_{M,\min}} \frac{\omega(\lambda_1)^{u_1}\cdots \omega(\lambda_N)^{u_N}}{\prod_{j=1}^N u^{(0)}_j! u^{(1)}_j!\cdots u^{(a-1)}_j!}\bigg)\pi^{w_p(M)}  \pmod{\pi^{w_p(M)+1}}.
\end{multline}
The coefficient of $\pi^{w_p(M)}$ on the right-hand side of (6.3) is a sum of distinct nonzero monomials in $\omega(\lambda_1),\dots,\omega(\lambda_N)$, hence is a nonzero polynomial of degree $\leq q-1$ in each $\omega(\lambda_i)$.

Let $H(\lambda_1,\dots,\lambda_N)$ be the reduction mod $p$ of the coefficient of $\pi^{w_p(M)}$ in Eq.~(6.3):
\begin{multline}
H(\lambda_1,\dots,\lambda_N) = \\ 
(-1)^n\sum_{u\in U_{M,\min}} \frac{\lambda_1^{u_1}\cdots \lambda_N^{u_N}}{\prod_{j=1}^N u^{(0)}_j! u^{(1)}_j!\cdots u^{(a-1)}_j!} \in{\mathbb F}_p[\lambda_1,\dots,\lambda_N].
\end{multline}
By Proposition 5.14 we have
\begin{equation}
H(\lambda_1,\dots,\lambda_N) = (-1)^n\sum_{(\gamma_0,\dots,\gamma_{a-1})\in\Gamma_M} \sum_{u\in U
(\gamma_0,\dots,\gamma_{a-1})}\frac{\prod_{j=1}^N \prod_{k=0}^{a-1} \lambda_j^{u^{(k)}_jp^k}}{\prod_{j=1}^N \prod_{k=0}^{a-1} u^{(k)}_j!}.
\end{equation}
The inner sum on the right-hand side may be written
\[ \prod_{k=0}^{a-1} \sum_{u^{(k)}\in U^+_{\rm min}(\gamma_k)} \frac{(\lambda_1^{u^{(k)}_1}\cdots\lambda_N^{u^{(k)}_N})^{p^k}}{u^{(k)}_1!\cdots u^{(k)}_N!}. \]
But this expression is just $\prod_{k=0}^{a-1} F_{\gamma_k}(\lambda^{p^k})$,  where the $F_\gamma$ are defined in (2.4).  This completes the proof of the following statement, which is our main result. 

\begin{theorem}
The Hasse invariant of the exponential sum $S(f_{\lambda},{\bf e})$ is 
\[ H(\lambda_1,\dots,\lambda_N) = (-1)^n \sum_{(\gamma_0,\dots,\gamma_{a-1})\in \Gamma_M} F_{\gamma_0}(\lambda) F_{\gamma_1}(\lambda^p)\cdots F_{\gamma_{a-1}}(\lambda^{p^{a-1}}). \]
If $H(\lambda_1,\dots,\lambda_N)\neq 0$, then
\[ {\rm ord}_p\;S(f_{\lambda},{\bf e}) = \frac{w_p(M)}{p-1}, \]
and if $H(\lambda_1,\dots,\lambda_N)=0$, then
\[ {\rm ord}_p\;S(f_\lambda,{\bf e})>\frac{w_p(M)}{p-1}. \]
\end{theorem}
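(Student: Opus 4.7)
The plan is to compute $S(f_\lambda,{\bf e})$ $p$-adically, extract its leading term in $\pi$, and then identify that leading term with the claimed product of $F_{\gamma_k}$'s using the combinatorial decomposition of $U_{M,\min}$ established in Section~5.

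First I would apply Ax's method: substitute $\Psi(x) = P(\omega(x))$ into the definition (1.1) of $S(f_\lambda,{\bf e})$, expand the product $\prod_j P(\omega(\lambda_j)\omega(x)^{{\bf a}_j})$ as a sum over multi-indices $u \in U^+_{q-1}$, and perform the sum over $({\mathbb F}_q^\times)^n$. By the orthogonality relation for $\omega$ on $({\mathbb F}_q^\times)^n$, the sum over $x$ vanishes unless $-{\bf e} + \sum_j u_j {\bf a}_j \in (q-1){\mathbb Z}^n$, i.e., unless $u \in U_M$ for $M = {\bf e} + (q-1){\mathbb Z}^n$. This reduces $S(f_\lambda,{\bf e})$ to a finite sum indexed by $U_M$ with coefficients involving the $p_{u_j}$.

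Next, I would invoke Stickelberger's congruence (6.1) to extract the leading $\pi$-adic behavior of each term: for $u \in U_M$, the coefficient $\prod_j p_{u_j}$ has $\pi$-adic valuation exactly $w_p(u)$ and leading coefficient $\prod_j \prod_k (u_j^{(k)}!)^{-1}$. Summing over $u \in U_M$ and picking out the smallest valuation gives the congruence mod $\pi^{w_p(M)+1}$ whose leading term is supported on $U_{M,\min}$. The coefficient of $\pi^{w_p(M)}$ is a nonzero polynomial in $\omega(\lambda_1),\dots,\omega(\lambda_N)$ (since the monomials it contains are distinct), which under reduction mod $\pi$ becomes $H(\lambda)$ and automatically produces the two order-of-vanishing statements about ${\rm ord}_p\,S(f_\lambda,{\bf e})$.

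Finally, I would appeal to Proposition~5.14 to decompose $U_{M,\min}$ into the disjoint union of the sets $U(\gamma_0,\dots,\gamma_{a-1})$ as $(\gamma_0,\dots,\gamma_{a-1})$ ranges over $\Gamma_M$. For each such sequence, the corresponding inner sum factors, by the base-$p$ expansion (5.13), into $\prod_{k=0}^{a-1}\sum_{u^{(k)}\in U^+_{\min}(\gamma_k)} (\lambda^{u^{(k)}})^{p^k}/(u^{(k)}_1!\cdots u^{(k)}_N!)$, which is precisely $\prod_k F_{\gamma_k}(\lambda^{p^k})$ by the definition (2.4). Collecting the overall sign $(-1)^n$ from $(q-1)^n \equiv (-1)^n \pmod p$ then yields the desired formula.

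The only real obstacle is the bookkeeping in step two, namely verifying that the terms of minimum $\pi$-adic valuation in the expansion of $S(f_\lambda,{\bf e})$ cannot cancel among themselves; but since each contributes a distinct monomial in $\omega(\lambda_1),\dots,\omega(\lambda_N)$ (the $u$'s in $U_{M,\min} \subseteq U^+_{q-1}$ are all distinct), no cancellation can occur, so the leading polynomial is nonzero and $H(\lambda)$ is well-defined.
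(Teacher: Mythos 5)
Your proposal follows essentially the same route as the paper: Ax's method plus Stickelberger's congruence to isolate the coefficient of $\pi^{w_p(M)}$ (which is nonzero because the monomials indexed by distinct $u\in U_{M,\min}$ are distinct), followed by Proposition~5.14 to decompose $U_{M,\min}$ and the base-$p$ factorization to recognize $\prod_k F_{\gamma_k}(\lambda^{p^k})$, with the sign coming from $(q-1)^n\equiv(-1)^n$. This is correct and matches the paper's argument step for step.
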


{\bf Example 2 (cont.):}  We compute the Hasse invariants of the ``twisted'' Kloosterman sums over ${\mathbb F}_q$ for $q=p,p^2$:
\begin{equation}
\sum_{x\in{\mathbb F}_q^\times} \omega(x)^{-{\bf e}}\Psi(\lambda_1x + \lambda_2/x), 
\end{equation}
which corresponds to choosing $A=\{{\bf a}_1,{\bf a}_2\}\subseteq{\mathbb Z}$, ${\bf a}_1=1$, ${\bf a}_2=-1$.  
First take $q=p$, so that $0\leq{\bf e}<p-1$ and $M={\bf e} + (p-1){\mathbb Z}$.  From Example~2 of the previous section we have immediately
\[ H(\lambda) = \begin{cases} { \displaystyle -\frac{\lambda_1^{\bf e}}{{\bf e}!}} & \text{if ${\bf e}<(p-1)/2$,} \\ {\displaystyle -\frac{\lambda_2^{p-1-{\bf e}}}{(p-1-{\bf e})!}} & \text{if ${\bf e}>(p-1)/2$,} \\ {\displaystyle -\frac{\lambda_1^{(p-1)/2}+\lambda_2^{(p-1)/2}}{((p-1)/2)!}} & \text{if ${\bf e} = (p-1)/2$.}  \end{cases} \]
 
Now consider the case $q=p^2$.  We have $0\leq{\bf e}< p^2-1$ and $M={\bf e}+(p^2-1){\mathbb Z}$, so we may write ${\bf e} = e_0+e_1 p$ with $0\leq e_0,e_1\leq p-1$.  In this situation, we computed in Example 2 of the previous section all possibilities for $\Gamma_M$ with their corresponding elements $(\gamma_0,\gamma_1)$.  Applying Theorem~6.6 gives the following results, where the case numbers refer to the case specified in Example~2 of Section~5. \\
{\bf Case 1:} ${\displaystyle H(\lambda) = -\frac{\lambda_1^{e}}{e_0! e_1!}}$.  \\
{\bf Case 2:} ${\displaystyle H(\lambda) = -\frac{\lambda_2^{p^2-1-e}}{(p-1-e_0)!(p-1-e_1)!}}$. \\
{\bf Case 3:} ${\displaystyle H(\lambda) = -\frac{\lambda_2^{p-e_0}\lambda_1^{(e_1+1)p}}{(p-e_0)!(e_1+1)!}}$. \\
{\bf Case 4:}  ${\displaystyle H(\lambda)= -\frac{\lambda_1^{e_0+1}\lambda_2^{(p-e_1)p}}{(e_0+1)!(p-e_1)!}}$. \\
{\bf Case 5:}  ${\displaystyle H(\lambda) =  -\frac{\lambda_1^{e}}{e_0! e_1!}-\frac{\lambda_2^{p-e_0}\lambda_1^{(e_1+1)p}}{(p-e_0)!(e_1+1)!}}$. \\
{\bf Case 6:}  ${\displaystyle H(\lambda) = -\frac{\lambda_2^{p^2-1-e}}{(p-1-e_0)!(p-1-e_1)!}-\frac{\lambda_1^{e_0+1}\lambda_2^{(p-e_1)p}}{(e_0+1)!(p-e_1)!} }$.  \\
{\bf Case 7:} ${\displaystyle H(\lambda) = -\frac{\lambda_1^{e}}{e_0! e_1!} -\frac{\lambda_2^{p^2-1-e}}{(p-1-e_0)!(p-1-e_1)!}}$. \\
{\bf Case 8:}  ${\displaystyle H(\lambda) = -\frac{\lambda_1^{e}}{e_0! e_1!}-\frac{\lambda_1^{e_0+1}\lambda_2^{(p-e_1)p}}{(e_0+1)!(p-e_1)!}}$.  \\
{\bf Case 9:} ${\displaystyle H(\lambda) = -\frac{\lambda_2^{p-e_0}\lambda_1^{(e_1+1)p}}{(p-e_0)!(e_1+1)!} -\frac{\lambda_2^{p^2-1-e}}{(p-1-e_0)!(p-1-e_1)!}}$. \\
{\bf Case 10:}  ${\displaystyle H(\lambda) = -\frac{\lambda_1^{e}}{e_0! e_1!} -\frac{\lambda_2^{p-e_0}\lambda_1^{(e_1+1)p}}{(p-e_0)!(e_1+1)!}-\frac{\lambda_2^{p^2-1-e}}{(p-1-e_0)!(p-1-e_1)!}}$.  \\
{\bf Case 11:}  ${\displaystyle H(\lambda) = -\frac{\lambda_1^{e}}{e_0! e_1!} -\frac{\lambda_2^{p^2-1-e}}{(p-1-e_0)!(p-1-e_1)!}-\frac{\lambda_1^{e_0+1}\lambda_2^{(p-e_1)p}}{(e_0+1)!(p-e_1)!}}$. 

{\bf Remark:} In \cite{AS} we calculated $w_p(M)$ for the exponential sum (6.7) for all odd~$q$ and all ${\bf e}$.  From the results in that paper, one can also compute the Hasse invariants for those exponential sums.

\section{Affine sums}

In this section we extend Theorem 6.6 to the case of exponential sums containing some affine variables.  Let now
\[ f_\lambda = \sum_{j=1}^N \lambda_jx^{{\bf a}_j}\in{\mathbb F}_q[x_1^{\pm 1},\dots,x_m^{\pm 1}, x_{m+1},\dots,x_n], \]
i.e., the first $m$ variables are toric and the last $n-m$ variables are affine.  We are considering the sum
\begin{equation}
S_{\rm aff}(f_\lambda,{\bf e}) = \sum_{x\in({\mathbb F}_q^\times)^m\times{\mathbb F}_q^{n-m}} \prod_{i=1}^m \omega(x_i)^{-e_i} \Psi(f_\lambda(x))\in{\mathbb Q}_p(\zeta_p,\zeta_{q-1}),
\end{equation}
where ${\bf e} = (e_1,\dots,e_m,0,\dots,0)\in{\mathbb Z}^n$.  For each subset $I\subseteq\{m+1,\dots,n\}$, let $f_{\lambda,I}\in{\mathbb F}_q[x_1^{\pm 1},\dots,x_m^{\pm 1},\{x_i\}_{i\in I}]$ be the Laurent polynomial obtained from $f_{\lambda}$ by setting $x_j=0$ for $j\not\in I$, $m+1\leq j\leq n$, and let ${\bf e}_I = (e_1,\dots,e_m,0,\dots,0)\in{\mathbb Z}^{m+|I|}$.  
As in (1.1), we set
\[ S(f_{\lambda,I},{\bf e}_I) = \sum_{x\in({\mathbb F}_q^\times)^{m+|I|}} \prod_{i=1}^m \omega(x_i)^{-e_i} \Psi(f_{\lambda,I}(x)). \]
Then we have the relation
\begin{equation}
S_{\rm aff}(f_{\lambda},{\bf e}) = \sum_{I\subseteq\{m+1,\dots,n\}} S(f_{\lambda,I},{\bf e}_I).
\end{equation}

We identify ${\mathbb Z}^{m+|I|}$ with the subgroup of ${\mathbb Z}^n$ of vectors with $j$-th coordinate $0$ for $j\not\in I$.  Let $M_I = {\bf e}_I + (q-1){\mathbb Z}^{m+|I|}$ and let $A_I$ be the subset of $A$ consisting of all ${\bf a}_i$ with $j$-th coordinate $0$ for $j\not\in I$.  Then we have  
\[ U_{M_I} = \{u\in U_M\mid  u_i= 0\text{ for every index $i$ for which }{\bf a}_i\not\in A_I\}. \]
From (6.2) we have
\begin{equation}
S(f_{\lambda,I},{\bf e}_I) = (q-1)^{m+|I|}\sum_{u\in U_{M_I}} \prod_{j=1}^N p_{u_j}\omega(\lambda_j)^{u_j}.
\end{equation}
Combining (7.2) and (7.3) gives
\begin{equation}
S_{\rm aff}(f_\lambda,{\bf e}) = \sum_{I\subseteq\{m+1,\dots,n\}} (q-1)^{m+|I|}\sum_{u\in U_{M_I}} \prod_{j=1}^N p_{u_j}\omega(\lambda_j)^{u_j}.
\end{equation}

The sets $U_{M_I}$ are not disjoint so we gather the contributions on the right-hand side of (7.4) as follows.  Fix $u\in U_M$
and let $I_u\subseteq\{m+1,\dots,n\}$ be the set of indices~$j$ such that the $j$-th coordinate of $\sum_{i=1}^N u_i{\bf a}_i$ is nonzero.  Then we have $u\in M_I$ if and only if $I_u\subseteq I$, and the contribution of $u$ to the sum (7.4) is
\[ \prod_{j=1}^N p_{u_j}\omega(\lambda_j)^{u_j}\sum_{I\supseteq I_u} (q-1)^{m+|I|}  = (q-1)^{m+
|I_u|}q^{n-m-|I_u|} \prod_{j=1}^N p_{u_j}\omega(\lambda_j)^{u_j}. \]
For $l=0,1,\dots,n-m$, let $M_l\subseteq M$ be the subset of lattice points with $j$-th coordinate nonzero for exactly $l$ values of $j\in\{m+1,\dots,n\}$.  Then $u\in U_{M_l}$ if and only if $|I_u|=l$, so
\begin{equation}
S_{\rm aff}(f_\lambda,{\bf e}) = \sum_{l=0}^{n-m} (q-1)^{m+l} q^{n-m-l} \sum_{u\in U_{M_l}} \prod_{j=1}^N p_{u_j}\omega(\lambda_j)^{u_j}.
\end{equation}
Note that the sets $M_l$ satisfy condition (5.5), since for ${\bf a} = \sum_{i=1}^N c_i{\bf a}_i\in{\mathbb N}A\cap M$ the value of $l$ for which ${\bf a}\in M_l$ is determined by the set $\{i\in \{1,\dots,N\}\mid c_i>0\}$, and this set is unchanged if some $c_{i_0}\geq q$ is replaced by $c_{i_0}-(q-1)$.

We now impose the hypothesis that for each $j\in \{m+1,\dots,n\}$ there exists ${\bf a}_i\in A$ such that the $j$-th coordinate of ${\bf a}_i$ is nonzero, i.e., the set $A$ is not contained in any coordinate hyperplane $x_j=0$, $m+1\leq j\leq n$.
\begin{lemma}
Assume the above hypothesis.  Then for $l=0,1,\dots,n-m$, 
\[ w_p(M_l) + a(n-m-l)(p-1)\geq w_p(M_{n-m}). \]
\end{lemma}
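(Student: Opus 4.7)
The plan is to prove the inequality by induction on the gap $k = n - m - l$, reducing it to the single-step bound $w_p(M_l) + a(p-1) \geq w_p(M_{l'})$ for some $l' > l$; telescoping then yields the stated inequality, with the base case $l = n-m$ being trivial.

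For the inductive step, take $u \in U_{M_l}$ achieving the minimum $w_p(u) = w_p(M_l)$. Since $l < n-m$, there exists some $j_0 \in \{m+1,\dots,n\}$ for which the $j_0$-th coordinate of $\sum_i u_i\mathbf{a}_i$ vanishes. By the hypothesis one may pick $i_0 \in \{1,\dots,N\}$ with $a_{i_0, j_0}\neq 0$; I will argue further that one can arrange $u_{i_0}=0$. Given this, set $v = u + (q-1)\mathbf{1}_{i_0}$. Then $v\in U^+_{q-1}$, the sum $\sum_i v_i\mathbf{a}_i = \sum_i u_i\mathbf{a}_i + (q-1)\mathbf{a}_{i_0}$ still lies in $M$ (we have added a multiple of $q-1$ in each coordinate), the $j_0$-th coordinate of the new sum equals $(q-1)a_{i_0, j_0} \neq 0$, and the digit expansion $q-1 = (p-1)(1 + p + \cdots + p^{a-1})$ yields $w_p(v) - w_p(u) = a(p-1)$ exactly.

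The main obstacle is twofold. First, one must justify that $u_{i_0}=0$ can be arranged: if every $i$ with $a_{i,j_0}\neq 0$ had $u_i>0$, a careful reduction --- for instance, subtracting $(q-1)\mathbf{1}_{i^*}$ from $u$ when some $u_{i^*}=q-1$ with $a_{i^*,j_0}\neq 0$, or more generally using a suitable relation in the lattice $L$ --- should produce a smaller-weight element in $U_{M_l}$ (or directly a feasible $v$ in a higher $U_{M_{l'}}$), contradicting minimality or closing the induction immediately. Second, the perturbation at $i_0$ activates $j_0$ but could simultaneously deactivate some active coordinate $k$, precisely when the $k$-th coordinate of $\sum u_i\mathbf{a}_i$ equals $-(q-1)a_{i_0,k}$; to secure net progress (i.e.\ $v\in U_{M_{l'}}$ with $l'>l$), one shows that $i_0$ can be chosen among the available candidates to avoid such destructive cancellation, or else invokes an amortized argument in which any incidental deactivation is compensated by a subsequent iteration within the overall $p$-weight budget $(n-m-l)a(p-1)$. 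These two points form the technical core of the proof.
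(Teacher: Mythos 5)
Your skeleton is exactly the paper's argument (augment a minimal $u\in U_{M_l}$ by $q-1$ in a coordinate $i_0$ with $a_{i_0,j_0}\neq 0$, gaining $a(p-1)$ in $p$-weight per step and iterating at most $n-m-l$ times), but the two points you defer as ``the technical core'' are left genuinely unresolved, and your proposed workarounds (subtracting $(q-1)\mathbf{1}_{i^*}$, choosing $i_0$ to avoid cancellation, an amortized argument) are not arguments. The missing observation is simple: the variables $x_{m+1},\dots,x_n$ are \emph{polynomial} (not Laurent) variables, so $a_{ij}\geq 0$ for every $i$ and every $j\in\{m+1,\dots,n\}$. Both of your obstacles evaporate once you use this.

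For your first obstacle: if $j_0\in\{m+1,\dots,n\}$ is inactive for $u$, then $\sum_i u_i a_{i,j_0}=0$ is a sum of nonnegative terms, hence $u_i a_{i,j_0}=0$ for every $i$; so $u_i=0$ for \emph{every} $i$ with $a_{i,j_0}\neq 0$, and in particular $u_{i_0}=0$ automatically for any admissible choice of $i_0$ --- nothing needs to be ``arranged.'' For your second obstacle: for $k\in\{m+1,\dots,n\}$ the $k$-th coordinate of $\sum_i u_i{\bf a}_i$ is nonnegative and $(q-1)a_{i_0,k}\geq 0$, so adding $(q-1){\bf a}_{i_0}$ can never annihilate an already-nonzero coordinate; the equality you fear, that the $k$-th coordinate equals $-(q-1)a_{i_0,k}$, can only hold when both sides are $0$, i.e.\ when $k$ was already inactive. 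Hence $I_{u'}\supseteq I_u\cup\{j_0\}$ and $u'\in U_{M_{l'}}$ with $l'\geq l+1$, with $w_p(u')=w_p(u)+a(p-1)$ exactly as you computed. (Coordinates $k\leq m$ are irrelevant since $M_l$ is defined only by the coordinates in $\{m+1,\dots,n\}$.) With these two observations supplied, your iteration terminates in $r\leq n-m-l$ steps at an element of $U_{M_{n-m}}$ and the inequality follows; as written, however, the proof is incomplete precisely where you said it was.
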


\begin{proof}
The assertion is trivial for $l=n-m$ so suppose that $u\in U_{M_l}$ for some $l<n-m$.  Then there exists an index $j$, $m+1\leq j\leq n$, such that $u_i = 0$ for all~$i$ such that ${\bf a}_i$ has nonzero $j$-th coordinate. By our hypothesis, there exists $i_0$ such that the $j$-th coordinate of ${\bf a}_{i_0}$ is nonzero.  Define
\[ u'_i = \begin{cases} u_i & \text{if $i\neq i_0$,} \\ q-1 & \text{if $i=i_0$.} \end{cases} \]
Then $I_{u'}\supseteq I_u\cup\{j\}$ so $u'\in U_{M_{l'}}$ for some $l'\geq l+1$.  Since $u_{i_0} = 0$, it follows that
\[ w_p(u') = w_p(u) + a(p-1). \]
If $l'<n-m$ this argument may be repeated until after, say, $r$ steps (with $r\leq n-m-l$) we arrive at $u^{(r)}\in U_{M_{n-m}}$, hence
\[ w_p(u^{(r)}) = w_p(u) + ar(p-1)\leq w_p(u) + a(n-m-l)(p-1). \]
This inequality implies the lemma.
\end{proof}

Equation (6.1) implies that
\[ {\rm ord}_p\;\prod_{j=1}^N p_{u_j}\omega(\lambda_i)^{u_j} = \frac{w_p(u)}{p-1}, \]
so
\begin{equation}
{\rm ord}_p\;q^{n-m-l}\prod_{j=1}^N p_{u_j}\omega(\lambda_i)^{u_j} = \frac{w_p(u) + (n-m-l)a(p-1)}{p-1}.
\end{equation}
Then Equations (7.5), (7.7), and Lemma 7.6 imply that
\begin{equation}
S_{\rm aff}(f_\lambda,{\bf e})\equiv {\sum_l}' (-1)^{m+l}\sum_{u\in U_{M_l,\min}} \prod_{j=1}^N p_{u_j}\omega(\lambda_j)^{u_j} \pmod{\pi^{w_p(M_{n-m})+1}}, 
\end{equation}
where $\sum_l'$ denotes a sum over those $l$ for which equality holds in Lemma 7.6.

We now apply Proposition 5.14 to decompose $U_{M_l,\min}$:  Let $\Gamma_{M_l}$ be the set of all sequences $(\gamma_0,\dots,\gamma_{a-1})$ of good elements of ${\mathbb Z}^n$ satisfying (5.11) and (5.12) (with $M$ replaced $M_l$).  Then
\[ U_{M_l,\min} = \bigcup_{(\gamma_0,\dots,\gamma_{a-1})\in\Gamma_{M_l}} U(\gamma_0,\dots,\gamma_{a-1}). \]
Arguing as in the proof of Theorem 6.6 gives the following result.

\begin{theorem}
The Hasse invariant of the exponential sum $S_{\rm aff}(f_\lambda,{\bf e})$ is
\[ H(\lambda_1,\dots,\lambda_N) = {\sum_l}' (-1)^{m+l} \sum_{(\gamma_0,\dots,\gamma_{a-1})\in\Gamma_{M_l}} F_{\gamma_0}(\lambda)F_{\gamma_1}(\lambda^p)\cdots F_{\gamma_{a-1}}(\lambda^{p^{a-1}}), \]
where $\sum_l'$ denotes a sum over those $l$ for which equality holds in Lemma $7.6$.
If $H(\lambda_1,\dots,\lambda_N)\neq 0$, then
\[ {\rm ord}_p\;S_{\rm aff}(f_\lambda,{\bf e}) = \frac{w_p(M_{n-m})}{p-1}, \]
and if $H(\lambda_1,\dots,\lambda_N) = 0$, then
\[ {\rm ord}_p\;S_{\rm aff}(f_\lambda,{\bf e}) > \frac{w_p(M_{n-m})}{p-1}. \]
\end{theorem}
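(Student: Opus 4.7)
The plan is to mirror the proof of Theorem 6.6 applied termwise to the congruence (7.8). First, for each primed index $l$ and each $u \in U_{M_l,\min}$, I would apply the Stickelberger congruence (6.1) to obtain
\[ \prod_{j=1}^N p_{u_j}\omega(\lambda_j)^{u_j} \equiv \frac{\omega(\lambda_1)^{u_1}\cdots\omega(\lambda_N)^{u_N}}{\prod_{j=1}^N u^{(0)}_j!\cdots u^{(a-1)}_j!}\,\pi^{w_p(u)} \pmod{\pi^{w_p(u)+1}}, \]
where the exponents $u^{(k)}_j$ are given by the base-$p$ expansion (5.1). Combining this with the equality case of Lemma 7.6 for each primed $l$ and reducing the coefficient of $\pi^{w_p(M_{n-m})}$ in $S_{\rm aff}(f_\lambda,{\bf e})$ modulo $p$ identifies the Hasse invariant as
\[ H(\lambda_1,\dots,\lambda_N) = {\sum_l}'(-1)^{m+l} \sum_{u \in U_{M_l,\min}} \frac{\lambda_1^{u_1}\cdots\lambda_N^{u_N}}{\prod_{j=1}^N u^{(0)}_j!\cdots u^{(a-1)}_j!}. \]

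Next I would apply Proposition 5.14 to each $U_{M_l,\min}$, decomposing it as the disjoint union $\bigcup_{(\gamma_0,\dots,\gamma_{a-1})\in\Gamma_{M_l}} U(\gamma_0,\dots,\gamma_{a-1})$. Reorganizing a given $u \in U(\gamma_0,\dots,\gamma_{a-1})$ by its base-$p$ digits, the monomial $\lambda^u$ factors as $\prod_k (\lambda^{u^{(k)}})^{p^k}$ and the denominator factors as $\prod_k \prod_j u^{(k)}_j!$, so the inner sum collapses to the product
\[ \prod_{k=0}^{a-1} \sum_{u^{(k)}\in U^+_{\min}(\gamma_k)} \frac{(\lambda_1^{u^{(k)}_1}\cdots\lambda_N^{u^{(k)}_N})^{p^k}}{u^{(k)}_1!\cdots u^{(k)}_N!} = \prod_{k=0}^{a-1} F_{\gamma_k}(\lambda^{p^k}), \]
exactly as in the derivation of Theorem 6.6. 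Assembling these factorizations across the primed $l$ yields the claimed formula for $H(\lambda)$. The order assertions are then immediate, since the $\pi^{w_p(M_{n-m})}$-coefficient of the $\pi$-adic expansion of $S_{\rm aff}$ reduces mod $p$ to $H(\lambda)$: when $H(\lambda)\ne 0$ we get ${\rm ord}_p\,S_{\rm aff}(f_\lambda,{\bf e}) = w_p(M_{n-m})/(p-1)$, and when $H(\lambda)=0$ the leading term vanishes and the valuation is strictly larger.

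The main technical obstacle is the careful verification of (7.8)—in particular, checking that the combined prefactor $(q-1)^{m+l} q^{n-m-l}$ from (7.5) reduces at the correct $\pi$-adic order $w_p(M_{n-m})$ to the sign $(-1)^{m+l}$ modulo $p$, which uses $p=-\pi^{p-1}$ together with the equality case of Lemma 7.6 to align the extra $a(n-m-l)(p-1)$ powers of $\pi$ coming from $q^{n-m-l}$. Once (7.8) is securely established, the remainder of the argument is essentially the same combinatorial bookkeeping used in the proof of Theorem 6.6, combined with the decomposition supplied by Proposition 5.14.
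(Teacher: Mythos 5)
Your proposal follows the paper's argument exactly: the paper derives the congruence (7.8) in the text preceding the theorem and then concludes by ``arguing as in the proof of Theorem 6.6,'' which is precisely the Stickelberger reduction, the Proposition 5.14 decomposition of each $U_{M_l,\min}$, and the factorization of the inner sum into $\prod_{k=0}^{a-1}F_{\gamma_k}(\lambda^{p^k})$ that you describe. You also correctly single out the only genuinely new ingredient beyond Theorem 6.6 --- the reduction of the prefactor $(q-1)^{m+l}q^{n-m-l}$ at the order $w_p(M_{n-m})$ via Lemma 7.6 --- as the technical crux.
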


{\bf Example 3:}  Suppose that ${\bf a}_j\in{\mathbb N}^n$ with $\sum_{i=1}^n a_{ji}=d>0$ for all $j$, i.e., $f_\lambda$ is a homogeneous polynomial of degree $d$.  We work over ${\mathbb F}_p$ and consider the sum
\[ S_{\rm aff}(x_{n+1}f_\lambda) = \sum_{(x_1,\dots,x_{n+1})\in{\mathbb F}_p^{n+1}} \Psi(x_{n+1}f_\lambda(x_1,\dots,x_n)). \]
If $N_{\rm aff}(\lambda)$ denotes the number of ${\mathbb F}_p$-rational points on the hypersurface $f_\lambda=0$ in~${\mathbb A}^n$, then $S_{\rm aff}(x_{n+1}f_\lambda) = pN_{\rm aff}(\lambda)$.  We assume there exist $u_i$, $0\leq u_i\leq p-1$, such that $\sum_{i=1}^N u_i({\bf a}_i,1)\in(p-1)({\mathbb Z}_{ >0})^{n+1}$ and $\sum_{i=1}^N u_i=p-1$.  This implies that $w_p(M_{n+1}) = p-1$ (so that $N_{\rm aff}(\lambda)$ is prime to $p$ for generic $\lambda$) and that equality holds in Lemma~7.6 only for $l=n+1$.  Theorem~7.9 then gives
\[ H(\lambda_1,\dots,\lambda_N) = (-1)^n \sum_{\gamma_0\in\Gamma_{M_{n+1}}} F_{\gamma_0}(\lambda_1,\dots,\lambda_N) \]
and $H(\lambda)\equiv N_{\rm aff}(\lambda)\pmod{p}$.  Let $\Gamma_{M_{n+1}} = \{\gamma^{(1)}_0,\dots,\gamma_0^{(r)}\}$.  Then 
\[ F_{\gamma_0^{(j)}}(\lambda_1,\dots,\lambda_N) = \sum_{\sum_i u_i({\bf a}_i,1) = \gamma_0^{(j)}} \frac{\lambda_1^{u_1}\cdots\lambda_N^{u_N}}{u_1!\cdots u_N!}, \]
where the sum is over all $N$-tuples $(u_1,\dots,u_N)$, $0\leq u_i\leq p-1$ for all $i$, such that $\sum_{i=1}^N u_i({\bf a}_i,1) = \gamma_0^{(j)}\in(p-1)({\mathbb Z}_{>0})^{n+1}$ and $\sum_{i=1}^N u_i=p-1$.  Note that $(p-1)!F_{\gamma_0^{(j)}}(\lambda)$ is the coefficient of $x^{\gamma_0^{(j)}}$ in $(x_{n+1}f_\lambda(x))^{p-1}$.
It now follows from Katz\cite[Algorithm~2.3.7.14]{K} that $(-1)^{n+1}H(\lambda)$ is congruent mod $p$ to the trace of the Hasse-Witt matrix associated to the projective hypersurface with equation $f_\lambda=0$.


\begin{thebibliography}{99}

\bibitem{AS} Adolphson, Alan and Sperber, Steven.  Twisted Kloosterman sums and $p$-adic Bessel functions, II: Newton polygons and analytic continuation.  Amer.\ J. Math.\ {\bf 109} (1987), 723--764.

\bibitem{AS1} Adolphson, Alan and Sperber, Steven.  $p$-adic estimates for exponential sums.  $p$-adic analysis (Trento, 1989), 11--22, Lecture Notes in Math., 1454, Springer, Berlin, 1990.

\bibitem{AS2} Adolphson, Alan and Sperber, Steven.  Exponential sums nondegenerate relative to a lattice.  Algebra and Number Theory {\bf 3} (2009), 881--906.

\bibitem{AS} Adolphson, Alan and Sperber, Steven.  On unit root formulas for toric exponential sums.  Algebra and Number Theory {\bf 6} (2012), 573--585.

\bibitem{A} Ax, James.  Zeroes of polynomials over finite fields.  Amer.\ J. Math.\ {\bf 86} (1964), 255--261.

\bibitem{B} Beukers, Frits.  Algebraic $A$-hypergeometric functions.  Invent.\ Math.\ {\bf 180} (2010), no.\ 3, 589--610.

\bibitem{Bl1} Blache, R\'egis. $p$-density and applications.  J. Number Theory (to appear), arXiv:0812.3382.

\bibitem{Bl2} Blache, R\'egis.  First vertices for generic Newton polygons.  arXiv:0912.2051.

\bibitem{Bl3} Blache, R\'egis.  Congruences for $L$-functions of additive exponential sums. arXiv: 1206.1387.

\bibitem{D1} Dwork, Bernard.  $p$-adic cycles.  Inst.\ Hautes \'Etudes Sci.\ Publ.\ Math.\ {\bf 37} (1969), 27--115.

\bibitem{D} Dwork, Bernard.  Bessel functions as $p$-adic functions of the argument.  Duke Math.\ J. {\bf 41} (1974), 711--738.

\bibitem{I} Igusa, Jun-ichi.  Class number of a definite quaternion with prime discriminant.  Proc.\ Nat.\ Acad.\ Sci.\ U.S.A. {\bf 44} (1958), 312--314.

\bibitem{K} Katz, Nicholas.  Algebraic solutions of differential equations ($p$-curvature and the Hodge filtration). Invent.\ Math.\ {\bf 18} (1972), 1--118. 

\bibitem{L} Lang, Serge.  Cyclotomic fields.\ II\@.  Graduate Texts in Mathematics, 69.  Springer-Verlag, New York-Berlin, 1980.

\bibitem{M} Moreno, Oscar; Shum, Kenneth W.; Castro, Francis N.; Kumar, P. Vijay.  Tight bounds for Chevalley-Warning-Ax-Katz type estimates, with improved applications.  Proc.\ London Math.\ Soc.\ (3) {\bf 88} (2004), no.\ 3, 545--564.

\bibitem{SST}  Saito, Mutsumi; Sturmfels, Bernd; Takayama, Nobuki. Gr\"obner deformations of hypergeometric differential equations. Algorithms and Computation in Mathematics, 6. Springer-Verlag, Berlin, 2000.

\bibitem{St} Stickelberger, Ludwig. Ueber eine Verallgemeinerung der Kreistheilung.  Math.\ Ann.\ {\bf 37} (1890), no.\ 3, 321--367.

\end{thebibliography}
\end{document}